\documentclass[12pt]{extarticle}
\usepackage{amsmath, amsthm, amssymb, hyperref, color, verbatim}
\usepackage{graphicx}
\usepackage{caption}
\captionsetup[figure]{font=small,labelfont=small}
\usepackage{subcaption}
\usepackage{bm}
\usepackage[T1]{fontenc}
\usepackage[font=small,labelfont=bf,tableposition=top]{caption}
\usepackage{epstopdf}

\DeclareCaptionLabelFormat{andtable}{#1~#2  \&  \tablename~\thetable}

\usepackage{floatrow}
\newfloatcommand{capbtabbox}{table}[][\FBwidth]

\usepackage{blindtext}

\usepackage[shortlabels]{enumitem}
\usepackage{graphicx}
\usepackage[all]{xypic}
\usepackage{makecell}
\usepackage{algpseudocode}
\usepackage{lipsum}

\tolerance 10000
\headheight 0in
\headsep 0in
\evensidemargin 0in
\oddsidemargin \evensidemargin
\textwidth 6.5in
\topmargin .25in
\textheight 8.7in

\newcommand{\leoncomment}[1]{ {\textcolor{blue}{ #1  --Leon}}} 

\newtheorem{theorem}{Theorem}
\numberwithin{theorem}{section}
\newtheorem{proposition}[theorem]{Proposition}
\newtheorem{lemma}[theorem]{Lemma}

\newtheorem{definition}[theorem]{Definition}
\newtheorem{problem}[theorem]{Problem}
\newtheorem{remark}[theorem]{Remark}
\newtheorem{example}[theorem]{Example}

\newtheorem{algorithm}[theorem]{Algorithm}

\newcommand{\RR}{\mathbb{R}}
\newcommand{\R}{\mathbb{R}}

\newcommand{\TP}{\mathbb{TP}}


\newcommand{\Trop}{\text{Trop}}
\newcommand{\tvol}{\text{tvol\,}}
\newcommand{\tdet}{\text{tdet\,}}
\DeclareMathOperator*{\argmin}{arg\,min}

 
 \date{}
 
\title{\textbf{Tropical Principal Component Analysis\\
 and its Application to Phylogenetics}}

\author{Ruriko Yoshida \and Leon Zhang \and Xu Zhang}

\begin{document}

\maketitle

\begin{abstract}
Principal component analysis is a widely-used method for the dimensionality reduction of a given data set in a high-dimensional Euclidean space. Here we define and analyze two analogues of principal component analysis in the setting of tropical geometry. In one approach, we study the Stiefel tropical linear space of fixed dimension closest to the data points in the tropical projective torus; in the other approach, we consider the tropical polytope with a fixed number of vertices closest to the data points. We then give approximative algorithms for both approaches and apply them to phylogenetics, testing the methods on simulated phylogenetic data and on an empirical dataset of Apicomplexa genomes.

\end{abstract}

\section{Introduction}
Principal component analysis (PCA) is a popular and robust method for reducing the
dimension of a high-dimensional data set. 
Given a positive integer $s\in \mathbb N$ and a collection of data points in a high-dimensional Euclidean space $\mathbb R^e$, the procedure projects the data points onto a plane of fixed dimension $s-1$, which is obtained by minimizing the sum of squared distances between
each point in the dataset and its orthogonal projection onto the
plane.  
This linear plane is a vector translate of some $(s-1)$-dimensional linear subspace; PCA also constructs an orthonormal basis for that subspace whose vectors are called principal
  components. The low-dimensional plane is thus described by an $(s\times e)$-dimensional matrix, whose first $s-1$ rows are the principal components and whose last row is the translation vector.

In this paper we propose two analogous approaches to a principal component analysis in the setting of tropical geometry. Given a positive integer $s$ and a collection of data points in the tropical projective torus, our tropical principal component analyses seek a tropically-geometric object, as close as possible to the data points in the tropical metric $d_{tr}$. In both cases, furthermore, this tropically-geometric object will be described by an $(s\times e)$-dimensional matrix. 

Classically, a full-rank matrix of shape $(s\times e)$ with $s<e$ defines an $s$-dimensional linear subspace of $\mathbb R^e$ via the span of its rows. This subspace is also described by the Pl\"ucker coordinates of the matrix. Tropically, on the other hand, these two notions diverge: the tropical Pl\"ucker coordinates of a tropical matrix produce a \emph{Stiefel tropical linear space}, defined in \cite{FR}, while the row-span of the matrix yields a \emph{tropical polytope}. These two notions give rise to our two interpretations of tropical principal component analysis.
\begin{figure}[!ht]
\centering
\includegraphics[width=2in]{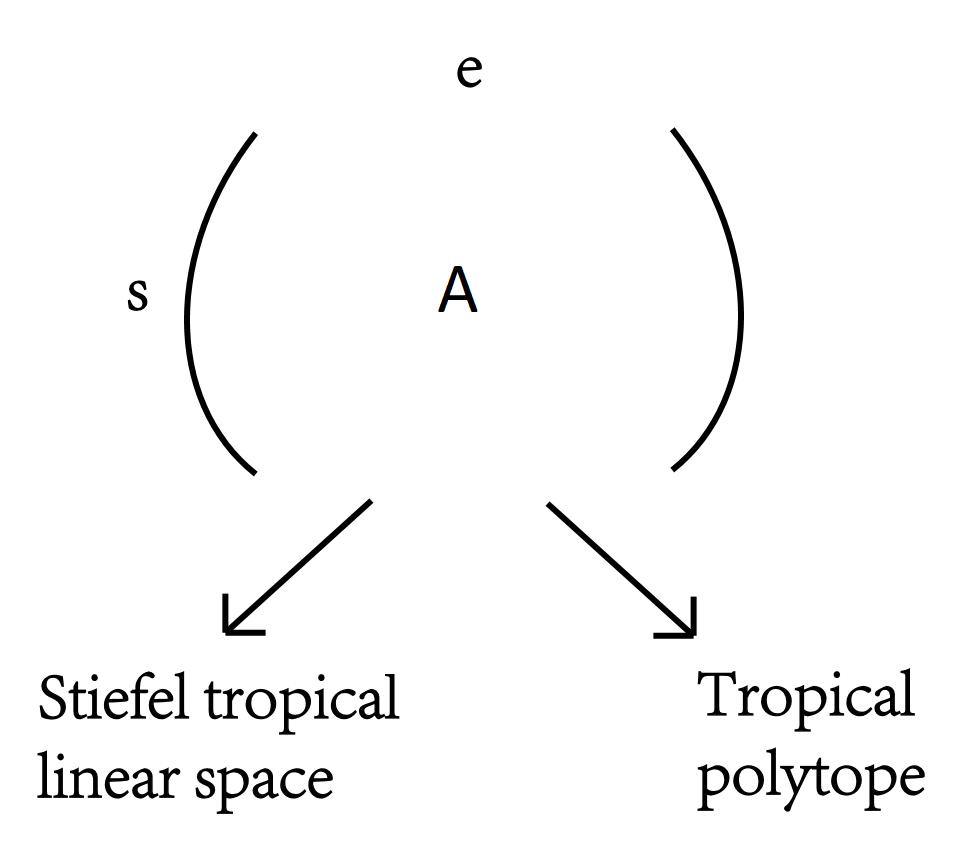}
\caption{A tropical matrix $A$ gives rise to both a Stiefel tropical linear space and a tropical polytope.}
\end{figure}

In Section \ref{def:trop}, we
discuss the basics of tropical geometry that we use throughout this paper. We also review the interpretation of the space of equidistant trees as a tropical linear space. 

We then describe our first approach to a tropical principal component analysis in Section \ref{trop:lin}, as the {Stiefel tropical linear space} closest to the data points under the tropical metric $d_{tr}$. We give an exact description for an $(e-1)$st order tropical PCA of $e$ points in terms of the \emph{tropical volume}, originally introduced in \cite{DGJ}. We also describe an heuristic algorithm to approximate a best-fit Stiefel tropical linear space of a given dimension.

Next, in Section \ref{trop:poly}, we discuss a tropical principal component analysis in terms of best-fit {tropical polytopes}. We reformulate the problem of finding a best-fit tropical polytope in terms of a mixed integer programming problem, then describe an approximative algorithm similar to the above.

We then apply these methods to phylogenetics. The space of rooted equidistant phylogenetic trees with $m$ leaves is naturally embedded into a tropical projective torus as a tropical linear space, so that collections of phylogenetic trees form a natural tropical dataset. We apply the approximative algorithms for both methods of tropical PCA on a simulated phylogenetic dataset in Section \ref{sim}, and on an empirical dataset of Apicomplexa genomes in Section \ref{apicomplexa}. In our tropical polytope approach, equidistant trees remain ultrametrics after projection, and so we examine the distribution of tree topologies in that case.
\section{Tropical basics}\label{def:trop}
In this section we review some basics of tropical geometry. Interested readers should consult \cite{MS} or \cite{J} for more details.

In the tropical semiring $(\,\RR \cup \{-\infty\},\oplus,\odot)\,$, the basic
arithmetic operations of addition and multiplication are redefined as follows:
$$a \oplus b := \max\{a, b\}, ~~~~ a \odot b := a + b ~~~~\text{  where } a, b \in \RR.$$
The element $-\infty$ is the identity element for addition and 0 is the identity element for multiplication: for all $a\in \RR\cup \{-\infty\}$, we have $a \oplus -\infty = a$ and $a \odot 0 = a .$

Given two $(m\times k)$-dimensional matrices $A,B$ and an $(n\times m)$-dimensional matrix $C$ with entries in $\RR\cup\{-\infty\}$, we can define the tropical matrix operations $A\oplus B$ and $A\otimes C$ in analogy with the ordinary matrix operations. Namely,
\[(A\oplus B)_{ij} = A_{ij}\oplus B_{ij},\ \  (A\otimes C)_{ij} = \bigoplus_{\ell=1}^m A_{i\ell}\otimes B_{\ell j}.\]

If $m = k$, so that $A$ is a square matrix, we can also define its \emph{tropical determinant} in analogy with the classical operation. We have
\[\tdet A = \bigoplus_{\sigma\in S_m} \left(\bigotimes_{i=1}^n A_{i,\sigma(i)}\right).\]

If the tropical determinant of $A$ is attained by at least two distinct permutations in $S_e$, we say that $A$ is \emph{tropically singular}.

In tropical geometry we often work in the \emph{tropical projective torus} $\mathbb R^e \!/\mathbb R {\bf 1}$, where ${\bf 1}$ denotes the all-ones vector. Given two points $v, w$ in
the tropical projective torus, their {\em tropical
  distance} $d_{\rm tr}(v,w)$ is defined as follows:
\begin{equation}
\label{eq:tropmetric} d_{\rm tr}(v,w) \,\, = \,\,
\max \bigl\{\, |v_i - w_i  - v_j + w_j| \,\,:\,\, 1 \leq i < j \leq e \,\bigr\} ,
\end{equation}
where $v = (v_1, \ldots , v_e)$ and $w= (w_1, \ldots , w_e)$.
This metric is also known as the
{\em generalized Hilbert projective metric} 
\cite[\S 2.2]{AGNS}, \cite[\S 3.3]{CGQ}.

\begin{example}
We present three points $P_1, P_2,$ and $P_3$ in the tropical projective torus in Figure \ref{trop_dist_Leon}.
\begin{figure}[!ht]
\centering
\includegraphics[width=2.5in]{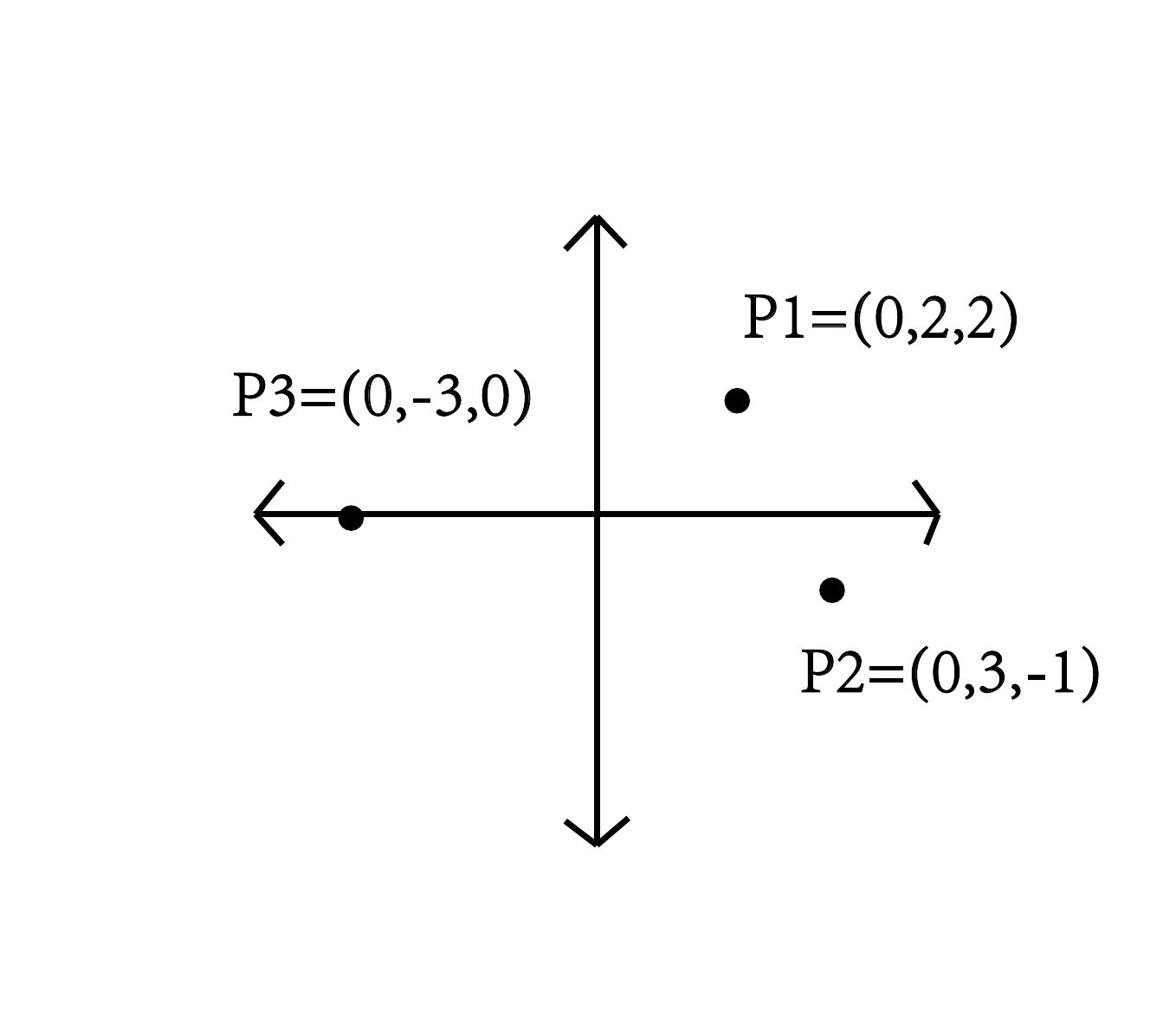}
\caption{Tropical distance in tropical plane}
\label{trop_dist_Leon}
\end{figure}
It can be checked that $d_{tr}(P_1, P_2) = 4, d_{tr}(P_1, P_3) = 5$, and $d_{tr}(P_2, P_3) = 7$.
\end{example}

\begin{example} 
There is a natural embedding of a phylogenetic tree on $m$ leaves as a point in $\RR^{\binom{m}{2}}/\RR {\bf 1}$, discussed in Section \ref{tropical-interpretation-for-phylogenetics}, in which the coordinates of a tree give the distances between leaves. We can think of the tropical distance between two phylogenetic trees as measuring the ``range'' of the disagreement between the two trees' distances.

For example, suppose we have the two phylogenetic trees $v = (4, 4, 2)$ and $w = (2, 4, 2)$ as in Figure \ref{trop_dist_Xu}. The largest disagreement between $v$ and $w$ in which tree $v$ finds a longer distance between two leaves is $\max \{v_i - w_i\} = 2$, and the largest disagreement between $v$ and $w$ in which tree $w$ shows a bigger distance between two leaves is $\max\{w_j - v_j\} = 2$. So $d_{\rm tr}(v,w) = 2 + 2 = 4$.
\begin{figure}[!ht]
\centering
\includegraphics[width=2.75in]{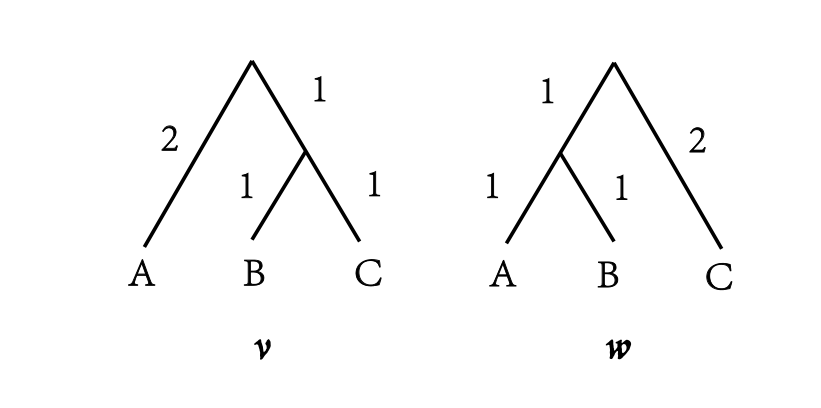}
\caption{Tropical distance in phylogenetics}
\label{trop_dist_Xu}
\end{figure}
\end{example}
\subsection{Tropical convexity}
\label{trop:conv}

We next review some basic definitions from tropical convexity.

A subset $S$ of $\RR^e$ is called \textit{tropically convex} if it contains the point $a \odot  x \oplus b \odot y$ for all $x,y
\in S$ and all $a, b \in \RR$. The \textit{tropical convex hull} or \textit{tropical polytope} of a given subset $V \subset \RR^e$ is the smallest tropically convex subset containing $V$ of $\RR^e$. We write it as ${\rm tconv}(V)$. The tropical convex hull of $V$ coincides with the set of all tropical linear combinations
$${\rm tconv}(V) = \{a_1 \odot v_1 \oplus a_2 \odot v_2 \oplus \cdots \oplus a_r \odot v_r : v_1,\ldots,v_r \in V \text{ and } a_1,\ldots,a_r \in \RR\}.$$

Any tropically convex subset $S$ of $\RR^e$ is closed under tropical
scalar multiplication, $\RR \odot S \subseteq S$. In other words, if $x
\in S$ then $x+\lambda \cdot {\bf 1} \in S \text{ for all } \lambda \in
\RR$. We therefore identify the tropically convex set $S$ with
its quotient in the tropical projective torus
$\RR^e/\RR {\bf 1}$.

Let $\mathcal P$ be a tropical polytope
$\mathcal{P} = {\rm tconv}(D^{(1)},D^{(2)},\ldots,D^{(s)})$, where the $D^{(i)}$ are points in $\RR^e/\RR {\bf 1}$. There is a projection map $\pi_{\mathcal P}$ sending any point $D$ to a closest point in the tropical polytope $\mathcal{P}$ as
\begin{equation}
\label{eq:tropproj} 
 \pi_\mathcal{P} (D) \,= \,
\lambda_1 \odot  D^{(1)} \,\oplus \,
\lambda_2 \odot  D^{(2)} \,\oplus \, \cdots \,\oplus \,
\lambda_s \odot  D^{(s)}  ,
\quad {\rm where} \,\, \lambda_k = {\rm min}(D-D^{(k)}) , \,
 k = 1,\ldots,s.
\end{equation}
This formula appears as \cite[Formula 5.2.3]{MS}.

\subsubsection{Tropical linear spaces}

Our treatment of this topic largely follows \cite[Sections 3 and 4]{JSY}.

\begin{definition}
Let $p:[e]^d\to \RR\cup\{-\infty\}$ be a map satisfying the following conditions:
\begin{enumerate}
	\item $p(\omega)$ depends only on the unordered set $\omega=\{\omega_1,\ldots,\omega_d\}\subseteq [e]$,
	\item $p(\omega)=-\infty$ whenever $\omega$ has fewer than $d$ elements, and
	\item({Exchange relation}.) For any $(d-1)$-subset $\sigma$ and any $(d+1)$-subset $\tau$ of $[e]$, the maximum
	\[\max\{p(\sigma\cup\{\tau_i\})+p(\tau-\{\tau_i\}):i\in[d+1]\}\]
	is attained at least twice.
\end{enumerate}
Such a map $\pi$ is called a \emph{tropical Pl\"ucker vector}.
\end{definition}

\begin{definition}
Let $p:[e]^d\to  \RR\cup\{-\infty\}$ be a tropical Pl\"ucker vector. The \emph{tropical linear space} $L_p$ consists of all points $x\in \mathbb T\mathbb P^{e-1}$ such that, for any $(d+1)$-subset $\tau$ of $[e]$, the maximum of the numbers $p(\tau-\{\tau_i\})+x_{\tau_i}$, for $i=1,\ldots, d$, is attained at least twice.
\end{definition}

It is well-known \cite[Proposition 5.2.8]{MS} that tropical linear spaces are tropically convex. 

\begin{definition}
\label{stiefel-tropical-linear-space}
Let $A$ be a tropical $d\times e$ matrix. Given a $d$-sized subset $\omega\subseteq [e]$, we write $A_\omega$ for the $d\times d$ matrix whose columns are the columns of $A$ indexed by elements of $\omega$. Then the map
\[p:[e]^d\to\mathbb R\cup \{-\infty\}\]
\[ \omega\mapsto \tdet(A_\omega)\]
is a tropical Pl\"ucker vector. The corresponding tropical linear space is called the \emph{Stiefel tropical linear space} given by $A$.
\end{definition}

\begin{example}
\label{tropical-linear-space-example}
Let
\[A = \left(\begin{matrix}0 & 2 & 4\\
0 & -1 & -3\end{matrix}\right)\]
and let $p$ be its associated tropical Pl\"ucker vector. Then 
\[p(\{1,2\})=\tdet\left(\begin{matrix}0&2\\0&-1\end{matrix}\right)=2,\]
\[p(\{1,3\})=\tdet\left(\begin{matrix}0&4\\0&-3\end{matrix}\right)=4,\]
\[p(\{2,3\})=\tdet\left(\begin{matrix}2&4\\-1&-3\end{matrix}\right)=3.\]
The Stiefel tropical linear space corresponding to $A$ is a tropical line in $\RR^3/\RR {\bf 1}$. It is pictured in Figure \ref{pic_example3_4}.
\begin{figure}[!ht]
\centering
\includegraphics[width=2.5in]{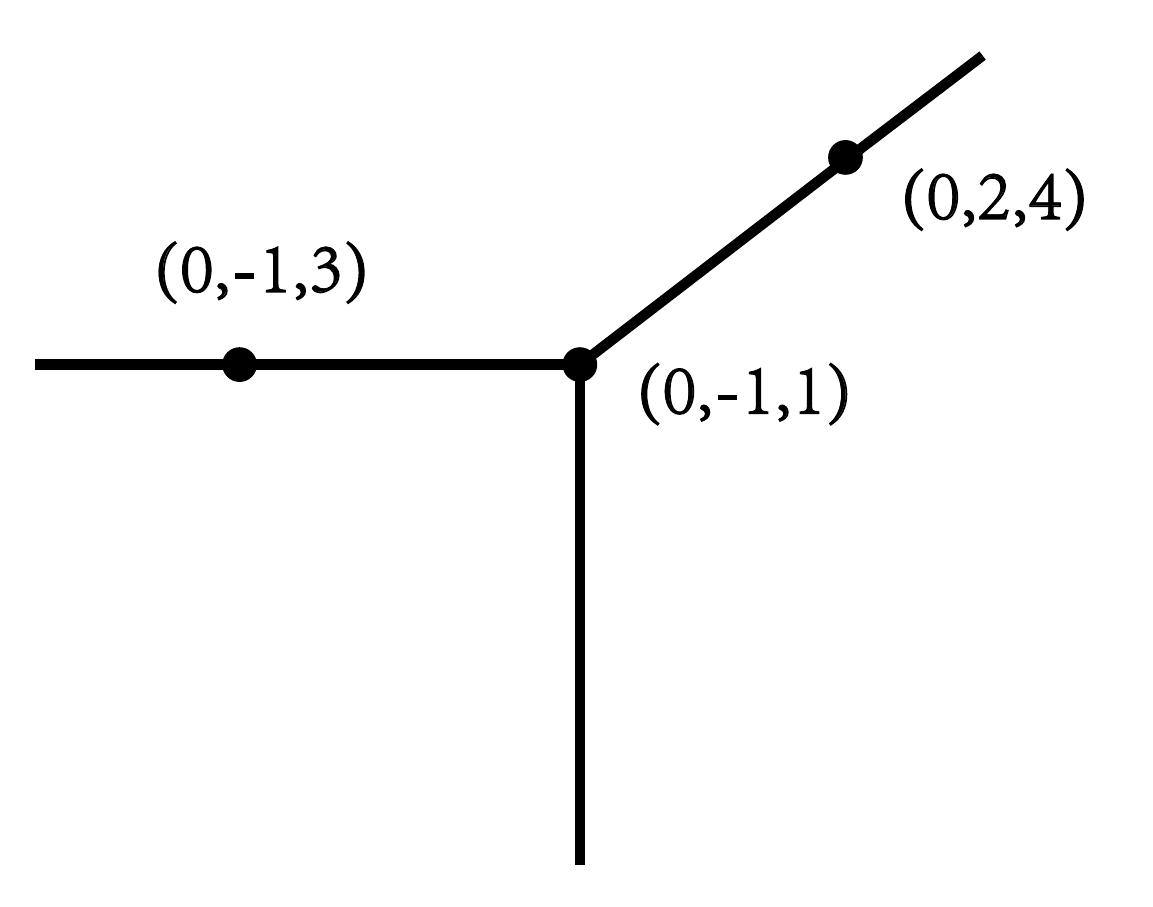}
\caption{Stiefel tropical linear space in Example \ref{tropical-linear-space-example}}
\label{pic_example3_4}
\end{figure}
\end{example}

In order to consider a ``tropical principal component analysis'', as described at the beginning of this section, we must be able to project onto a tropical linear space. This projection operation is described by the \emph{Red and Blue Rules}. From \cite[Theorem 15]{JSY} we have:

\begin{theorem}[The Blue Rule]
\label{blue-rule}
Let $p:[e]^d\to \overline \RR$ be a tropical Pl\"ucker vector and $L_p$ its associated tropical linear space in $\RR^e/\RR {\bf 1}$. Fix $u\in\RR^e/\RR {\bf 1}$, and define the point $w\in\RR^e/\RR {\bf 1}$ whose $i$th coordinate is
\begin{equation}
\label{eq:bluerule}
 \quad w_i \,\, = \,\,
 {\rm max}_\tau \,{\rm min}_{j \not\in \tau} \bigl( u_j + p({\tau \cup \{i\}}) - p({\tau \cup \{j\}}) \bigr)
 \qquad {\rm for} \,\,\, i = 1,2,\ldots, e
 \end{equation}
where $\tau$ runs over all $(d-1)$-subsets of $[e]$ that do not contain $i$.

Then $w\in L_p$, and any other $x\in L_p$ satisfies $d(u,x)\geq d(u,w)$. In other words, $w$ attains the minimum distance of any point in $L_p$ to $u$.
\end{theorem}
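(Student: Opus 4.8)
The plan is to recognize the point $w$ defined by (\ref{eq:bluerule}) as the nearest-point projection of $u$ onto $L_p$, obtained by treating $L_p$ as a tropical polytope and invoking the projection formula (\ref{eq:tropproj}). The tool is the (well-known) \emph{cocircuit description} of a tropical linear space: for each $(d-1)$-subset $\sigma\subseteq[e]$, let $c^\sigma\in\TP^{e-1}$ be the cocircuit vector whose $k$th coordinate is $p(\sigma\cup\{k\})$, so that $c^\sigma_k=-\infty$ precisely when $k\in\sigma$. Discarding the finitely many $\sigma$ with $c^\sigma=(-\infty,\dots,-\infty)$, one has $L_p={\rm tconv}\bigl(\{c^\sigma\}\bigr)$ inside $\TP^{e-1}$ (see \cite{MS}, \cite{JSY}). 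It therefore suffices to compute the closest point of this tropical polytope to $u$ and check that the resulting expression is exactly (\ref{eq:bluerule}).

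Applying (\ref{eq:tropproj}) with the generators $D^{(k)}$ replaced by the cocircuits $c^\sigma$ gives $\pi_{L_p}(u)=\bigoplus_\sigma\lambda_\sigma\odot c^\sigma$, where $\lambda_\sigma=\min(u-c^\sigma)=\min_k\bigl(u_k-p(\sigma\cup\{k\})\bigr)$. Since $u$ is finite while $p(\sigma\cup\{k\})=-\infty$ exactly for $k\in\sigma$, the terms with $k\in\sigma$ are $+\infty$ and drop out, so $\lambda_\sigma=\min_{k\notin\sigma}\bigl(u_k-p(\sigma\cup\{k\})\bigr)$. Taking the $i$th coordinate, $w_i=\max_\sigma\bigl(\lambda_\sigma+c^\sigma_i\bigr)=\max_\sigma\bigl(\lambda_\sigma+p(\sigma\cup\{i\})\bigr)$; the summands with $i\in\sigma$ equal $-\infty$ and may be discarded, so the maximum runs over $(d-1)$-subsets $\sigma$ with $i\notin\sigma$. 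Substituting the expression for $\lambda_\sigma$ yields
\[w_i=\max_{\sigma:\,i\notin\sigma}\ \min_{j\notin\sigma}\bigl(u_j+p(\sigma\cup\{i\})-p(\sigma\cup\{j\})\bigr),\]
which is precisely (\ref{eq:bluerule}) with $\tau=\sigma$. Because (\ref{eq:tropproj}) returns a point of the tropical polytope at minimum tropical distance from $u$, the point $w=\pi_{L_p}(u)$ lies in $L_p$ and satisfies $d(u,x)\ge d(u,w)$ for all $x\in L_p$, which is the claim.

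The two points needing care are: (i) the cocircuit description $L_p={\rm tconv}(\{c^\sigma\})$, which I would cite from \cite{JSY} (or re-derive from the exchange relation of a tropical Pl\"ucker vector); and (ii) that (\ref{eq:tropproj}), stated for generators in $\RR^e/\RR{\bf 1}$, remains valid when some $c^\sigma$ have $-\infty$ coordinates, provided $u$ is finite. For (ii) the clean route is to check directly that $\bigoplus_\sigma\lambda_\sigma\odot c^\sigma$ is the coordinatewise-largest element of ${\rm tconv}(\{c^\sigma\})$ dominated by $u$ (a point $\bigoplus_\sigma a_\sigma\odot c^\sigma$ satisfies $\le u$ iff $a_\sigma\le\lambda_\sigma$ for all $\sigma$, a constraint made vacuous by $-\infty$ entries), and then invoke the standard fact that, for a closed tropically convex set $S$ and finite $u$, the coordinatewise-largest point of $S$ below $u$ minimizes $d_{\rm tr}(u,\cdot)$ over $S$ (this follows from translation-invariance of $d_{\rm tr}$ together with the closure of $S$ under coordinatewise maxima). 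An alternative, fully self-contained route avoids the cocircuit decomposition: verify from the exchange relations that the $w$ of (\ref{eq:bluerule}) satisfies the incidence conditions defining $L_p$, then check $w\le u$ and maximality, and finish with the same minimization lemma. In either approach the only genuinely delicate step is the $\min$--$\max$ manipulation certifying membership $w\in L_p$; the rest is bookkeeping with $-\infty$'s.
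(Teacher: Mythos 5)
The paper does not prove this theorem: it is quoted verbatim from \cite[Theorem 15]{JSY}, so there is no in-paper argument to compare against. Your proposal reconstructs what is essentially the standard derivation behind that citation --- identify $L_p$ with the tropical convex hull of its cocircuits $c^\sigma$ (with $c^\sigma_k=p(\sigma\cup\{k\})$), apply the nearest-point formula (\ref{eq:tropproj}), and observe that the $-\infty$ entries prune the min and max to exactly the index ranges appearing in (\ref{eq:bluerule}). The computation is correct, and you correctly flag the two places where real content is being imported: the cocircuit spanning result $L_p={\rm tconv}(\{c^\sigma\})$ (the inclusion $L_p\subseteq{\rm tconv}(\{c^\sigma\})$ is the nontrivial direction, and it is what guarantees $w$ is nearest among \emph{all} of $L_p$ rather than just among the span of the cocircuits), and the fact that the coordinatewise-largest point of a closed tropically convex set dominated by $u$ minimizes $d_{\rm tr}(u,\cdot)$; both are standard and your sketched justifications are sound.
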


\begin{theorem}[The Red Rule]
\label{red-rule}
Let $p:[e]^d\to \overline \RR$ be a tropical Pl\"ucker vector and $L_p$ its associated tropical linear space in $\RR^e/\RR {\bf 1}$. Fix $u\in\RR^e/\RR {\bf 1}$. Let $v$ be the all-zeros vector. For every $(d+1)$-sized subset $\tau$ of $[e]$, compute $\max p({\tau-\tau_i}) + u_{\tau_i}$. If this maximum is unique, attained with index $\tau_i$, then let $\gamma_{\tau,\tau_i}$ be the positive difference between the second maximum and this maximum, and set $v_{\tau_i}=\max(v_{\tau_i}, \gamma_{\tau,\tau_i})$.

Then $v$ gives the difference between $u$ and a closest point of $L_p$. In particular, if $w$ is the point in $L_p$ returned by the Blue Rule, we have 
\[u = w + v.\]
\end{theorem}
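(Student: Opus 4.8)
The plan is to deduce the Red Rule from the Blue Rule. Let $w$ be the point returned by the Blue Rule and write $v^{R}$ for the vector returned by the Red Rule (the theorem's $v$). Since the first assertion of the theorem follows from the second ($w$ is a closest point to $u$ by Theorem \ref{blue-rule}), it suffices to prove the coordinatewise identity $u = w + v^{R}$, i.e.\ $\delta = v^{R}$ where $\delta := u - w$. I would start with two elementary observations: first, $v^{R} \ge 0$, since the Red Rule only ever replaces a coordinate by a larger nonnegative value; second, $w \le u$, since taking $j = i$ in the inner minimum of \eqref{eq:bluerule} contributes the term $u_i$, forcing $w_i \le u_i$. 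Hence $\delta \ge 0$, and the goal reduces to the two inequalities $\delta \ge v^{R}$ and $\delta \le v^{R}$.

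For $\delta \ge v^{R}$: fix a $(d+1)$-subset $\tau$ with scores $s_k := p(\tau \setminus \{\tau_k\}) + u_{\tau_k}$, and suppose $\max_k s_k$ is attained uniquely, at $\tau_{k_0} = i$ say, with runner-up value $s_i - \gamma$ where $\gamma = \gamma_{\tau,i} > 0$. Because $w \in L_p$, the shifted scores $s_k - \delta_{\tau_k}$ (coming from $w_{\tau_k} = u_{\tau_k} - \delta_{\tau_k}$) attain their maximum at least twice. A short case analysis then gives $\delta_i \ge \gamma$: if $s_i - \delta_i$ is among the maxima, some other index $k$ satisfies $s_k - \delta_{\tau_k} = s_i - \delta_i$, so $\delta_i = (s_i - s_k) + \delta_{\tau_k} \ge \gamma$ using $s_k \le s_i - \gamma$ and $\delta_{\tau_k} \ge 0$; otherwise $s_i - \delta_i$ lies strictly below the doubly-attained maximum, which is itself at most $\max_{k \ne k_0} s_k = s_i - \gamma$, so $\delta_i > \gamma$. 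Taking the maximum over all $(d+1)$-subsets that charge coordinate $i$ (and using $\delta_i \ge 0$ when none do) yields $\delta_i \ge v^{R}_i$.

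For $\delta \le v^{R}$, the harder direction, I would use that $w = \max\{x \in L_p : x \le u\}$; this holds because \eqref{eq:bluerule} is the projection formula \eqref{eq:tropproj} specialized to the presentation of $L_p$ as the tropical convex hull of its cocircuits, and such a tropical projection returns the largest point of the set that lies below $u$. Since $u - v^{R} \le u$ (as $v^{R} \ge 0$), it then suffices to show $u - v^{R} \in L_p$, for then $u - v^{R} \le w$, i.e.\ $\delta \le v^{R}$. Checking $u - v^{R} \in L_p$ amounts to verifying, for every $(d+1)$-subset $\rho$, that the scores $p(\rho \setminus \{\rho_k\}) + (u - v^{R})_{\rho_k}$ still attain their maximum at least twice: when the $u$-scores of $\rho$ are uniquely maximized, the Red-Rule decrement makes the top score drop at least to the second, and one checks it does not overshoot; when they are tied, one must rule out that a decrement forced on one of the tied coordinates by some other $(d+1)$-subset breaks the tie. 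This last point is the main obstacle, and it is exactly where the exchange relation (property 3 of a tropical Pl\"ucker vector) does the work, relating the tie structures of different $(d+1)$-subsets and forcing the prescribed decrements to be mutually compatible. Once $u - v^{R} \in L_p$ is in hand, the two inequalities combine to give $\delta = v^{R}$ and hence $u = w + v^{R}$.
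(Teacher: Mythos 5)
The paper does not prove this statement: Theorems \ref{blue-rule} and \ref{red-rule} are quoted verbatim from \cite[Theorem 15]{JSY}, so there is no in-paper argument to compare yours against. Judged on its own terms, your skeleton is reasonable and the first half is correct. The reduction to the coordinatewise identity $u = w + v^{R}$, the observations $v^{R}\ge 0$ and $w\le u$ (via the $j=i$ term in \eqref{eq:bluerule}), and the inequality $\delta \ge v^{R}$ are all soundly argued: the case analysis using the membership criterion for $w\in L_p$ on each $(d+1)$-subset $\tau$ does give $\delta_i\ge\gamma_{\tau,i}$. The identification of the Blue Rule output as the largest point of $L_p$ dominated by $u$ is also legitimate, though it silently uses the theorem that $L_p$ is the tropical convex hull of the cocircuits of $p$ (so that \eqref{eq:bluerule} really is an instance of \eqref{eq:tropproj}); that fact is from \cite{JSY} and should be cited, not treated as obvious.

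The genuine gap is the converse inequality. Your entire argument for $\delta\le v^{R}$ rests on the claim $u - v^{R}\in L_p$, and you do not prove it: you list the two failure modes (overshooting when a coordinate is decremented by more than the local gap $\gamma_{\rho,\rho_k}$ because some \emph{other} subset forced a larger decrement, and tie-breaking when a decrement hits a coordinate that was part of a doubly-attained maximum for $\rho$) and then assert that the exchange relation ``does the work.'' But $u-v^{R}\in L_p$ together with maximality of $w$ is logically equivalent to the conclusion $u-v^{R}=w$ of the theorem, so this unexecuted step is not a loose end --- it is the whole content of the Red Rule. To close it you would need a concrete argument, presumably a three-term exchange computation showing that if $\tau$ charges coordinate $i$ with gap $\gamma_{\tau,i}$ and $\rho$ is another $(d+1)$-subset whose unique maximum also sits at $i$, then the runner-up structure of $\rho$ is controlled by that of $\tau$ in a way that prevents overshoot; nothing in the proposal indicates how that computation goes. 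As written, the proof establishes only $w \le u - v^{R} \le u$, not the claimed equality.
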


We write $\pi_{L_p}$ as the projection function which takes a point $u\in \RR^e/\RR {\bf 1}$ and returns the nearest point $w\in L_p$ given by the Blue Rule.

\begin{example}
\label{red-blue-rule-example}
Let $A$ be the matrix of Example \ref{tropical-linear-space-example}, with $p$ and $L_p$ its associated tropical Pl\"ucker vector and Stiefel tropical linear space. Let $u$ be the point $(1, -2, 3)\in\RR^e/\RR {\bf 1}$. 

The Blue Rule constructs a point $w\in\RR^3/\RR{\bf 1}$ whose first coordinate is
\[\max(\min(u_1+p(\{1,2\})-p(\{1,2\}), u_3+p(\{1,2\})-p(\{1,3\})),\]
\[\min(u_1+p(\{1,3\})-p(\{1,3\}),u_2+p(\{1,3\})-p(\{2,3\}))).\]
Substituting in, we get the first coordinate of $w$ as 
\[w_1 = \max(\min(1 + 2 -2, 3 + 2 - 4), \min(1 + 4 - 4, -2 + 4 - 3)) = \max(1, -1) = 1.\]
Similarly, we get $w_2 = -2$ and $w_3 = 0$. So the Blue Rule outputs the vector $(1, -2, 2)$.

The Red Rule constructs a vector $v$ as follows. First, we begin with $v = (0, 0, 0)$. Next we take the set $\tau = [e]$ and compute $\max(p(\{2,3\})+u_1, p(\{1,3\})+u_2, p(\{1,2\})+u_3) = \max(3 + 1, 4 -2, 2 + 3) = 5.$ So the Red Rule redefines $v_3 = 5 - 4 = 1$, and hence outputs the vector $v=(0,0,1)$. Now Theorem \ref{red-rule} states that $u = w + v$, which is easily verified to hold.
\end{example}

\begin{definition}
Let $v = (v_1,\ldots, v_e)$ be a real vector, and define the tropical linear functional $\bigoplus (-v_i)\otimes x_i$. Let $\mathcal H$ be the tropical solution set of this linear functional: that is, $\mathcal H$ consists of all $x\in\RR^e/\RR {\bf 1}$ such that the maximum of $\bigoplus( -v_i)\otimes x_i$ is attained at least twice. We call any $\mathcal H$ obtained in this way a \emph{tropical hyperplane}.
\end{definition}

\begin{remark}
\label{tropical-hyperplane-valuated-matroid}
Let $A$ be a tropical matrix of dimensions $(e-1)\times e$. Then the Stiefel tropical linear space of $A$ is a tropical hyperplane. Furthermore, any tropical hyperplane is the Stiefel tropical linear space of such a tropical matrix $A$.
\end{remark}

\subsection{A tropical interpretation for phylogenetic trees}
\label{tropical-interpretation-for-phylogenetics}

In this section we describe some of the tropical aspects underlying the study of phylogenetic trees. Our treatment of this subject largely follows \cite[Section 4.3]{MS}.

\begin{definition}
A \emph{dissimilarity map} $d$ is a function $d:[m]\times [m]\to \mathbb R_{\geq 0}$ such that $d(i,i)=0$ and $d(i,j)=d(j,i)\geq 0$ for each $i,j\in [m]$. If, furthermore, we have that $d(i,j)\leq d(i,k)+d(k,j)$ for all $i,j,k\in [m]$, we call $d$ a \emph{metric}. Note that for convenience we often write $d_{ij}$ for the term $d(i,j)$.

We can represent a dissimilarity map $d$ by an $m\times m$ matrix $D$ whose $(i,j)$th entry is $d_{ij}$. Because $D$ is clearly symmetric and all diagonal entries are trivial, there is a natural embedding of $d$ into the tropical space $\mathbb R^{e} = R^{\binom m 2}$.
\end{definition}

In fact, the condition of being a metric is intrinsically tropical.

\begin{lemma}
\label{metric-condition}
Let $d:[m]\times [m]\to\mathbb R_{\geq 0}$ be a dissimilarity metric and $D$ its corresponding matrix. Then $d$ is a metric iff $-D\odot -D = -D$.
\end{lemma}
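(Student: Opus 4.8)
The plan is to unwind the definition of tropical matrix multiplication, reduce the claimed matrix identity to a collection of scalar equalities indexed by pairs $(i,j)$, and then recognize those equalities as the triangle inequalities.

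First I would compute the $(i,j)$ entry of $-D\odot -D$. By the definition of tropical matrix multiplication,
\[
(-D\odot -D)_{ij} \,=\, \bigoplus_{k=1}^m \big((-D)_{ik}\odot (-D)_{kj}\big) \,=\, \max_{k\in[m]}\big(-d_{ik}-d_{kj}\big) \,=\, -\min_{k\in[m]}(d_{ik}+d_{kj}).
\]
Hence $-D\odot -D = -D$ if and only if, for every $i,j\in[m]$, we have $\min_{k\in[m]}(d_{ik}+d_{kj}) = d_{ij}$.

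Next I would observe that one of the two inequalities is automatic. Since $d_{ii}=0$, the choice $k=i$ gives $d_{ii}+d_{ij}=d_{ij}$, so $\min_{k}(d_{ik}+d_{kj})\le d_{ij}$ for every $i,j$, using nothing beyond the defining properties of a dissimilarity map (the choice $k=j$ works equally well). Therefore the entrywise identity is equivalent to the reverse inequality $\min_k(d_{ik}+d_{kj})\ge d_{ij}$ for all $i,j$, that is, $d_{ik}+d_{kj}\ge d_{ij}$ for all $i,j,k\in[m]$ — which is precisely the statement that $d$ satisfies the triangle inequality, i.e.\ that $d$ is a metric. The diagonal cases $i=j$ cause no trouble: there $d_{ij}=0\le d_{ik}+d_{ki}$ holds automatically by nonnegativity, so they impose no extra condition.

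There is essentially no hard step here; the lemma is a direct computation. The only points requiring care are bookkeeping ones: correctly tracking the sign when passing from the max-plus product of $-D$ with itself to a $\min$ of sums, and using $d_{ii}=0$ together with nonnegativity to dispose of the ``easy'' inequality and of the diagonal entries. One could also package the argument conceptually: negation intertwines the tropical (max-plus) matrix product with the min-plus product, under which the identity $D*D=D$ is the classical characterization of a metric as a matrix equal to its own shortest-path closure.
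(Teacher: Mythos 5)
Your proposal is correct and follows essentially the same route as the paper: compute the $(i,j)$ entry of $-D\odot -D$ as $\max_k(-d_{ik}-d_{kj})$, observe that the choice $k=i$ (using $d_{ii}=0$) makes one inequality automatic, and identify the remaining inequality with the triangle inequality. The only cosmetic difference is that you pass to the min-plus formulation, while the paper stays in max-plus throughout.
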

\begin{proof}
The $(i,j)$th entry of $-D\odot -D$ is equal to 
\[\bigoplus_{k=1}^m -d_{ik}-d_{kj} = \max_{k\in [m]}(-d_{ik}-d_{kj}).\]
In particular, we note that the $(i,j)$th entry of $-D\odot -D$ is at least as large as $-d_{ij}=-d_{ii}-d_{ij}$. Now a simple negation of the definition shows that $d$ is a metric iff $-d_{ij}\geq \max_{k\in [m]}(-d_{ik}-d_{kj})$. 
\end{proof}

\begin{definition}
Let $T=(V,E)$ be a tree with $m$ labeled leaves and no vertices of degree two. We call such a tree a \emph{phylogenetic tree}.
\end{definition}

\begin{definition}
Let $T$ be a phylogenetic tree with $m$ leaves labeled with the elements of $[m]$, and assign a length $\ell_e\in \mathbb R$ to each edge $e$ of $T$. Let $d:[m]\times [m]\to \mathbb R$ be defined so that $d_{ij}$ is the total length of the unique path from leaf $i$ to leaf $j$. We call a function $d$ obtained in this way a \emph{tree distance}. If, furthermore, each entry of the distance matrix $D$ is nonnegative, then $d$ is in fact a metric. We call such a $d$ a \emph{tree metric}. As before, we can embed $D$ into $\mathbb R^{e}$. 
\end{definition}

Of course, any tree distance differs from a tree metric by some scalar multiple of ${\bf 1}$. Hence the sets of tree distances and tree metrics coincide in $\mathbb R/{\bf 1}\RR$. 

\begin{definition}
Let $d:[m]\times [m]\to \mathbb R_{\geq 0}$ be a metric which satisfies the following strengthening of the triangle inequality for each choice of $i,j,k\in [m]$:
\[d(i,k)\leq \max(d(i,j),d(j,k)).\]
We call such a metric an \emph{ultrametric}. Let $\mathcal U_m$ denote the collection of all ultrametrics in $\mathbb R^e/{\bf 1}\mathbb R$.
\end{definition}

It is well-known that all ultrametrics are tree metrics. In fact, all ultrametrics are derived from \emph{equidistant trees}, where all leaves have the same distance to some distinguished root vertex. Furthermore, the tree metric of an equidistant tree is an ultrametric; hence ultrametrics and equidistant trees convey equivalent information.

Let $L_m$ denote the subspace of $\mathbb R^e$ defined by the linear equations $x_{ij} - x_{ik} + x_{jk}=0$ for $1\leq i < j <k \leq m$. The tropicalization $\Trop(L_m)\subseteq \RR^e/\RR {\bf 1}$ is the tropical linear space consisting of points $(v_{12},v_{13},\ldots, v_{m-1,m})$ such that $\max(v_{ij},v_{ik},v_{jk})$ is obtained at least twice for all triples $i,j,k\in [m]$.

\begin{remark}
Experts in tropical geometry will note that the tropical linear space $\Trop(L_m)$ corresponds to the graphic matroid of the complete graph $K_m$.
\end{remark}

\begin{theorem}
\label{ultrametrics}
The image of $\mathcal U_m$ in the tropical projective torus $\RR^e/\RR {\bf 1}$ coincides with $\Trop(L_m)$.
\end{theorem}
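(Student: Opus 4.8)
The plan is to prove the two inclusions separately, using the description of $\Trop(L_m)$ given above as the set of $v\in\RR^e/\RR{\bf 1}$ for which $\max(v_{ij},v_{ik},v_{jk})$ is attained at least twice for every triple $i,j,k$, together with the two elementary observations that (a) this ``max attained twice'' condition is exactly the assertion that the two largest of $v_{ij},v_{ik},v_{jk}$ are equal, and (b) it is invariant under translating a representative by a multiple of ${\bf 1}$, since it only involves which coordinates achieve a maximum.

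For $\mathcal{U}_m\subseteq\Trop(L_m)$: starting from an ultrametric $d$, I would fix a triple $i,j,k$ and relabel so that $d_{ij}$ is the smallest of the three values. The strong triangle inequalities with middle index $j$ and middle index $i$ read $d_{ik}\le\max(d_{ij},d_{jk})$ and $d_{jk}\le\max(d_{ij},d_{ik})$; using $d_{ij}\le d_{ik}$ and $d_{ij}\le d_{jk}$ these collapse to $d_{ik}\le d_{jk}$ and $d_{jk}\le d_{ik}$, hence $d_{ik}=d_{jk}$. Thus the maximum of $d_{ij},d_{ik},d_{jk}$ is attained at least twice, so the class of $d$ lies in $\Trop(L_m)$.

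For $\Trop(L_m)\subseteq\mathcal{U}_m$: given $v\in\Trop(L_m)$, I would pick $\lambda\ge-\min_{i<j}v_{ij}$ and set $\tilde d = v+\lambda{\bf 1}$, so that $\tilde d$ represents the same class in $\RR^e/\RR{\bf 1}$ and has all coordinates $\ge 0$; declaring $\tilde d_{ii}=0$ makes $\tilde d$ a dissimilarity map. By observation (b), $\tilde d$ still has the ``max attained twice'' property on every triple, from which the ultrametric inequality $\tilde d_{ik}\le\max(\tilde d_{ij},\tilde d_{jk})$ follows for all $i,j,k$: if $\tilde d_{ik}$ is a maximum of the triple it equals one of the other two, and otherwise it is strictly dominated by the larger of the other two. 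The ordinary triangle inequality then follows from $\tilde d_{ik}\le\max(\tilde d_{ij},\tilde d_{jk})\le\tilde d_{ij}+\tilde d_{jk}$ using non-negativity, so $\tilde d$ is a metric and hence an ultrametric, and its image in the torus is $v$.

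I do not expect a serious obstacle: the mathematical content is just the equivalence between the tropicalized-linear-equations description of $\Trop(L_m)$ and the classical three-point condition, which is bookkeeping. The only points that need care are (i) that we are entitled to use the set-theoretic description of $\Trop(L_m)$ stated above, so that we need not separately argue that the three-term conditions cut out the whole tropicalization — this is precisely the graphic-matroid remark — and (ii) the translation step, which replaces a possibly-negative representative of a class in the torus by a genuine non-negative dissimilarity map before the word ``ultrametric'' is applied.
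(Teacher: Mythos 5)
Your proposal is correct and follows essentially the same route as the paper: both inclusions are proved by the same elementary three-point bookkeeping, translating between the strong triangle inequality and the ``max attained twice'' condition, with a normalization by a multiple of $\bf 1$ to pass between torus classes and genuine nonnegative dissimilarity maps. The only cosmetic difference is that for $\mathcal U_m\subseteq\Trop(L_m)$ you single out the smallest of the three values and deduce the other two are equal, whereas the paper singles out the largest and shows it is attained twice; you are also slightly more careful in spelling out the translation step and the fact that the ultrametric inequality plus nonnegativity yields the ordinary triangle inequality, which the paper leaves implicit.
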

\begin{proof}
Let $(v_{12},v_{13},\ldots, v_{m-1,m})\in \Trop(L_m)$. We may assume that each coordinate is nonnegative, so that this point corresponds to the image of a dissimilarity map $d$. To see that $d$ is in fact an ultrametric, fix $i,j,k\in [m]$. We know that $\max(d_{ij},d_{ik}, d_{kj})$ is attained at least twice, by the definition of $\Trop(L_m)$. If $d_{ij}$ is one of these maximums then it must be equal to $\max(d_{ik},d_{kj})$. If $d(i,j)$ is not one of these maximums then it must be strictly less than $\max(d_{ik},d_{kj})$. Either way, we have that $d_{ij}\leq \max(d_{ik},d_{kj})$. This shows that $d$ is in fact an ultrametric, so that $\Trop(L_m)\subseteq \mathcal U_m$.

Let $\bar D\in \mathcal U_m$. Then there exists some lifted ultrametric $d$ with associated matrix $D$. Fix a choice of $i,j,k\in [m]$, and without loss of generality let $i,j$ such that $d_{ij}=\max(d_{ij},d_{ik},d_{kj})$. Because $d$ is an ultrametric, we have that $d_{ij}\leq \max(d_{ik},d_{kj})$.  Hence in fact $d_{ij}=\max(d_{ik},d_{kj})$, and the maximum of $\max(d_{ij},d_{ik},d_{kj})$ is attained at least twice. Thus $\Trop(L_m)\supseteq \mathcal U_m$.
\end{proof}

In words, Theorem \ref{ultrametrics} states that the image of the space of labeled rooted trees is a tropical linear space. The set of equidistant trees thus has an intrinsic tropical structure.

\section{Tropical PCA as a Stiefel tropical linear space}\label{trop:lin}

As noted in the introduction, one can interpret ordinary $(s-1)$th principal component analysis as a method of dimensionality reduction, replacing data points with their projections onto the translate of some particularly well-fitting linear space of dimension $s-1$. Classically, this translation of a well-fitted linear space can be described by an $(s\times e)$-dimensional matrix, whose first $(s-1)$ rows are the basis vectors of the linear space, and whose last row is a translation vector from the origin.

In analogy with the classical case, our approach to an $(s-1)$th tropical principal component analysis is to replace data points with their tropical projections onto the best-fit Stiefel tropical linear space of dimension $(s-1)$, defined by a tropical matrix of size $s\times e$.

\subsection{Best-fit tropical hyperplanes}

We begin our discussion of tropical principal component analysis by
considering a specific case: reducing by one the dimension
of a collection of $e$ datapoints in $\RR^e/\RR {\bf 1}$. In other words, we
seek the $(e-1)$th order tropical PCA, or a \emph{best-fit tropical hyperplane}, for a collection of $e$ data points in $\RR^e/\RR {\bf 1}$.

We require the following definition, from \cite{DGJ}.

\begin{definition}
Let $A$ be an $e\times e$ matrix with entries in $\overline \RR$ whose rows correspond to $e$ points in $\RR^e/\RR {\bf 1}$. The \emph{tropical volume} of $A$ is given by the expression
\[\tvol A := \bigoplus_{\sigma\in S_e} \sum a_{i,\sigma(i)} - \bigoplus_{\tau\in S_e-\sigma_{opt}}\sum a_{i,\tau(i)},\]
where $\sigma_{opt}$ is an optimal permutation attaining the tropical determinant in the first tropical sum.
\end{definition}

Recall that a square tropical matrix $A$ is \emph{tropically singular} if two distinct permutations attain the tropical determinant. The following, from \cite[Lemma 5.1]{RGST}, is one of the earliest results in tropical geometry:

\begin{lemma}
\label{common-hyperplane}
Let $A$ be an $e\times e$ tropical matrix whose rows represent $e$ points of $\RR^e/\RR {\bf 1}$. Then $A$ is tropically singular iff those $k$ points lie on a tropical hyperplane in $\RR^e/\RR {\bf 1}$. In particular, $\tvol(A)=0$ iff the $e$ points lie on a common tropical hyperplane.
\end{lemma}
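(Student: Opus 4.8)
Before the first assertion, note that the second one is immediate from it: $\tvol(A)\ge 0$ always, and $\tvol(A)=0$ precisely when the tropical determinant $\bigoplus_{\sigma}\sum_i a_{i\sigma(i)}$ is attained by at least two permutations (if so, deleting one optimal permutation still leaves an optimal one in the second tropical sum), that is, precisely when $A$ is tropically singular. So the whole content is the equivalence ``$A$ tropically singular $\iff$ the rows of $A$ lie on a common tropical hyperplane,'' and my plan is to transfer it to a valued field. Fix an algebraically closed field $K$ with value group $\RR$ and residue characteristic $0$ (for instance the Hahn series field $\CC((t^{\RR}))$), with valuation $\mathrm{val}\colon K^\ast\to\RR$, and identify $\Trop:=-\mathrm{val}$ to match the max-plus conventions of the paper, so that $\Trop$ sends products to sums, sums to maxima (with equality unless the two largest summands tie), and $0$ to $-\infty$. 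The link to the statement is the trivial fact that $\widetilde A\in K^{e\times e}$ is singular iff $\sum_i c_i\widetilde A_{ki}=0$ for all $k$, for some $c\in K^e\setminus\{0\}$.

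For ``singular $\Rightarrow$ hyperplane'': suppose $A$ is tropically singular. By the Fundamental Theorem of tropical geometry applied to the determinantal hypersurface $\{\det=0\}$ (Kapranov's theorem, \cite{MS}), the tropical hypersurface of $\Trop(\det)$ --- which is exactly the locus of tropically singular matrices --- consists of the $\Trop$-images of singular matrices over $K$. So I may pick a lift $\widetilde A$ that is singular over $K$ with $\Trop(\widetilde A_{ki})=A_{ki}$ for all $k,i$, and then pick $c\in K^e\setminus\{0\}$ with $\sum_i c_i\widetilde A_{ki}=0$ for every $k$. Since each of these sums vanishes, the maximum over $i$ of $\Trop(c_i\widetilde A_{ki})=A_{ki}+\Trop(c_i)$ cannot be attained at a single index (otherwise the sum would be a nonzero element of that valuation). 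Hence every row of $A$ lies on the tropical hyperplane with coefficient vector $(\Trop(c_1),\dots,\Trop(c_e))$.

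For ``hyperplane $\Rightarrow$ singular'' I would run the construction backwards. Suppose all rows of $A$ lie on a tropical hyperplane with real coefficient vector $n$. Lift $n$ to $c\in(K^\ast)^e$ with $\Trop(c_i)=n_i$, and build $\widetilde A$ row by row: for row $k$ the maximum $\mu_k:=\max_i(A_{ki}+n_i)$ is attained at two distinct columns $i_k\ne i_k'$; set all $\widetilde A_{ki}$ with $i\ne i_k$ to have $\Trop$-value $A_{ki}$ and generic leading coefficients, then solve the single linear equation $\sum_i c_i\widetilde A_{ki}=0$ for $\widetilde A_{ki_k}$. The lowest-valuation terms of $\sum_{i\ne i_k}c_i\widetilde A_{ki}$ are exactly those from columns attaining $\mu_k$ (among them $i_k'$), so generically they do not cancel and $\widetilde A_{ki_k}$ again has $\Trop$-value $A_{ki_k}$. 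The matrix $\widetilde A$ so built is singular, since $\sum_i c_i\widetilde A_{ki}=0$ for all $k$ with $c\ne 0$; while in $\det\widetilde A=\sum_\sigma\mathrm{sgn}(\sigma)\prod_i\widetilde A_{i\sigma(i)}$ the $\sigma$-monomial has $\Trop$-value $\sum_i A_{i\sigma(i)}$, so if $\tdet(A)$ were attained by a unique $\sigma$ then $\det\widetilde A$ would be nonzero, a contradiction. Therefore $\tdet(A)$ is attained at least twice, i.e.\ $A$ is tropically singular.

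I expect the main obstacle to be the valuation bookkeeping in the lift --- verifying that the matrix $\widetilde A$ one writes down really has the prescribed tropicalization --- which is exactly where one uses both the genericity of the free entries and the hypothesis that each row's maximum is attained twice (that extra tie is the single degree of freedom making each row equation solvable without disturbing any valuation). If one prefers to stay inside tropical geometry, the implication ``hyperplane $\Rightarrow$ singular'' also admits an elementary combinatorial proof: translating columns by $n$ and subtracting each row's maximum reduces it to showing that an $e\times e$ matrix with nonpositive entries and at least two zeros per row is tropically singular; one forms the bipartite ``zero graph'' (every row-vertex then has degree $\ge 2$) and produces two optimal assignments --- immediate via an alternating cycle when the zero graph has a perfect matching, and requiring a K\"onig-type deficiency argument together with induction on $e$ in the remaining case, which is the fiddly point there.
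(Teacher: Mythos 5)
Your argument is correct, but note that the paper does not actually prove this lemma: it is quoted from Richter--Gebert, Sturmfels and Theobald \cite{RGST}, whose original proof is an elementary combinatorial one, essentially the normalize-and-find-an-alternating-cycle argument you sketch in your final paragraph, carried out via the duality theory of the assignment problem. Your main route through Kapranov's theorem is therefore a genuinely different, more algebro-geometric proof: lifting to a singular matrix over a valued field with value group $\RR$ makes ``singular $\Rightarrow$ hyperplane'' nearly automatic (a vanishing sum over a valued field must have its minimal valuation attained twice), at the price of invoking the Fundamental Theorem for the determinantal hypersurface, while the converse requires the careful row-by-row lift you describe; both directions check out, and your reduction of the $\tvol(A)=0$ statement to tropical singularity of $A$ is exactly right. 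One corner case to patch in the forward direction: the kernel vector $c$ of the lifted matrix may be forced to have some zero coordinates (for instance if two columns of $\widetilde A$ coincide and the rest are generic), so $(\Trop(c_1),\ldots,\Trop(c_e))$ may have $-\infty$ entries, whereas the paper's tropical hyperplanes are defined by \emph{real} coefficient vectors. This is harmless --- replace each $-\infty$ coefficient by a sufficiently negative real number, which does not disturb the two tying indices in any row since all $A_{ki}$ are finite --- but it should be said explicitly. (The ``$k$ points'' in the statement is a typo for ``$e$ points''; that is the paper's slip, not yours.)
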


Of course, if our collection of $e$ datapoints $D^{(i)}$ lie on a common hyperplane, then this hyperplane is our $(e-1)$th tropical PCA. This fact hints at some relationship between the tropical volume and the best fit hyperplane. In fact, this relationship is quite strong.

\begin{theorem}
\label{tropical-volume}
Let $D^{(1)},\ldots, D^{(e)}$ be a collection of $e$ points in $\RR^e/\RR {\bf 1}$. Then the best-fit hyperplane attains a distance from the $e$ points equal to their tropical volume, and one such best-fit hyperplane is spanned by a choice of $e-1$ of the points.
\end{theorem}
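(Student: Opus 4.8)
The plan is to pass to an equivalent reformulation and then prove matching upper and lower bounds, both of assignment‑problem flavor; here the objective is understood as the total tropical distance $\sum_{i=1}^e d_{\mathrm{tr}}(D^{(i)},\mathcal H)$ over tropical hyperplanes $\mathcal H$. Write $A$ for the $e\times e$ matrix with rows $D^{(1)},\dots,D^{(e)}$ and let $\sigma_{opt}\in S_e$ attain $\tdet A$. If $\sigma_{opt}$ is not unique then $A$ is tropically singular, so $\tvol A=0$ and the $e$ points already lie on a common hyperplane by Lemma \ref{common-hyperplane}; hence I may assume $\sigma_{opt}$ is unique and $\tvol A>0$. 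The first thing I would record is a formula for the distance of a point to a hyperplane: specializing the Red Rule (Theorem \ref{red-rule}) to $d=e-1$, where the only $(d+1)$-subset of $[e]$ is $[e]$ itself, one gets that if $\mathcal H=\{x:\max_i(c_i+x_i)\text{ attained at least twice}\}$ then $d_{\mathrm{tr}}(u,\mathcal H)$ equals the gap $\max_i(c_i+u_i)$ minus the second largest of the numbers $c_i+u_i$, realized by lowering the unique top coordinate.

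For the lower bound I would fix an arbitrary tropical hyperplane $\mathcal H$ and set $E^{(i)}=\pi_{\mathcal H}(D^{(i)})$, so $\sum_i d_{\mathrm{tr}}(D^{(i)},\mathcal H)=\sum_i d_{\mathrm{tr}}(D^{(i)},E^{(i)})$. Since all $E^{(i)}$ lie on $\mathcal H$, Lemma \ref{common-hyperplane} says the matrix $A'$ with rows $E^{(1)},\dots,E^{(e)}$ is tropically singular. Writing $E^{(i)}=D^{(i)}+\delta^{(i)}$ and subtracting a constant from each row of $A'$ (which changes neither singularity nor any $d_{\mathrm{tr}}$-distance), I may assume $0\le\delta^{(i)}_j\le\varepsilon_i:=d_{\mathrm{tr}}(D^{(i)},E^{(i)})$, so every permutation weight satisfies $\sum_i a_{i\rho(i)}\le\sum_i a'_{i\rho(i)}\le\sum_i a_{i\rho(i)}+\sum_i\varepsilon_i$. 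Two distinct permutations attain $\tdet A'$; at least one of them, say $\tau'$, differs from $\sigma_{opt}$, and by uniqueness $\sum_i a_{i\tau'(i)}\le\tdet A-\tvol A$. Then $\tdet A\le\tdet A'=\sum_i a'_{i\tau'(i)}\le\sum_i a_{i\tau'(i)}+\sum_i\varepsilon_i\le\tdet A-\tvol A+\sum_i\varepsilon_i$, giving $\sum_i d_{\mathrm{tr}}(D^{(i)},\mathcal H)=\sum_i\varepsilon_i\ge\tvol A$.

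For the upper bound, together with the ``spanned by $e-1$ points'' claim, I would, for each index $k$, let $\mathcal H_{\widehat k}$ be the Stiefel tropical linear space (a hyperplane, by Remark \ref{tropical-hyperplane-valuated-matroid}) of the $(e-1)\times e$ matrix obtained from $A$ by deleting row $k$; its coefficient vector is $c_j=\tdet A^{(k,j)}$, the tropical cofactor obtained by deleting row $k$ and column $j$. Each of the $e-1$ points $D^{(i)}$, $i\ne k$, lies on $\mathcal H_{\widehat k}$ (the rows of a tropical matrix lie on its Stiefel tropical linear space), so $\sum_i d_{\mathrm{tr}}(D^{(i)},\mathcal H_{\widehat k})=d_{\mathrm{tr}}(D^{(k)},\mathcal H_{\widehat k})$. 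By the gap formula and the tropical Laplace expansion $\max_j\bigl(a_{kj}+\tdet A^{(k,j)}\bigr)=\tdet A$ (attained uniquely at $j=\sigma_{opt}(k)$), this distance equals $\tdet A-\max\{\sum_i a_{i\rho(i)}:\rho(k)\ne\sigma_{opt}(k)\}$. Minimizing over $k$, and using that every permutation $\rho\ne\sigma_{opt}$ disagrees with $\sigma_{opt}$ in at least one coordinate, the minimum over $k$ equals $\tdet A-\max\{\sum_i a_{i\rho(i)}:\rho\ne\sigma_{opt}\}=\tvol A$. Hence for the optimal $k^{*}$ the hyperplane $\mathcal H_{\widehat{k^{*}}}$, which is spanned by $e-1$ of the data points, attains $\sum_i d_{\mathrm{tr}}(D^{(i)},\mathcal H_{\widehat{k^{*}}})=\tvol A$; with the lower bound this finishes the proof.

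I expect the main obstacle to be the lower bound: the upper bound is essentially a cofactor computation, but the lower bound must rule out that a clever simultaneous perturbation of all $e$ rows does better than moving a single point. The clean route is the reduction, via Lemma \ref{common-hyperplane}, to ``minimal total row-perturbation making $A$ tropically singular,'' followed by the assignment-weight inequality chase above. The subtle points are normalizing the perturbations correctly in the tropical projective quotient and disposing of the degenerate cases (non-unique $\sigma_{opt}$, or $A$ already singular) where $\tvol A=0$ makes the statement vacuous.
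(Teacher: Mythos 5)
Your proof is correct, and it splits the same way the paper does (an upper bound exhibiting a hyperplane spanned by $e-1$ of the points, plus a matching lower bound), but the lower bound takes a genuinely different and considerably shorter route. Your upper bound is essentially the paper's Lemma \ref{upper-bound}: delete a row of $A$, note the remaining points lie on the resulting Stiefel hyperplane, and evaluate the distance of the deleted point via the Red Rule and the tropical Laplace expansion $\tdet A=\max_j\bigl(a_{kj}+\tdet A^{(k,j)}\bigr)$; your extra step of minimizing over $k$ and observing that every $\rho\neq\sigma_{opt}$ disagrees with $\sigma_{opt}$ somewhere is exactly what the paper accomplishes by choosing $j$ with $\rho(j)\neq j$ for the second-best permutation $\rho$. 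For the lower bound, however, the paper's Lemma \ref{lower-bound} is a multi-page induction on $e$ with a delicate case analysis on how Pl\"ucker coordinates appear in two-way and multiple ties, together with repeated perturbations of the optimal hyperplane. You bypass all of that: project each $D^{(i)}$ to a nearest point $E^{(i)}$ on an arbitrary hyperplane $\mathcal H$, invoke Lemma \ref{common-hyperplane} to conclude the matrix of the $E^{(i)}$ is tropically singular, normalize each row-perturbation into $[0,\varepsilon_i]$, and run a short inequality chase on permutation weights. This reduction to ``minimal total row-perturbation making $A$ tropically singular'' is cleaner, more robust, and avoids the tie bookkeeping entirely; it buys a proof one could actually check line by line, at the cost of saying slightly less about the structure of optimal hyperplanes (the paper's argument also extracts that $\sum_j\alpha_j=\tdet A'$ along the way). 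Two small points you share with the paper and should spell out in a final write-up: in the degenerate case $\tvol A=0$ you still owe a hyperplane \emph{spanned by $e-1$ of the points} containing the $e$th (your $\mathcal H_{\widehat k}$ with $k$ a position where two optimal permutations disagree does this, since the Red Rule then returns zero), and the fact that the rows of a tropical matrix lie on its Stiefel tropical linear space is used without proof in both arguments.
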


To prove this theorem, we first show that the tropical volume is an upper bound on the minimal distance of a best-fit tropical hyperplane.

\begin{lemma}\label{upper-bound}
Let $D^{(1)},\ldots, D^{(e)}$ be a collection of $e$ points in $\RR^e/\RR {\bf 1}$, and let $A$ be the matrix whose $i,j$th entry is $D^{(i)}_j$. Then there exists a hyperplane of distance $\tvol A$ from the data points, spanned by some choice of $e-1$ of the points.
\end{lemma}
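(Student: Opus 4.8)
The plan is to construct, for a suitably chosen index $k\in[e]$, the tropical hyperplane $\mathcal{H}_k$ obtained as the Stiefel tropical linear space of the $(e-1)\times e$ matrix whose rows are the points $D^{(i)}$ with $i\neq k$. By Remark \ref{tropical-hyperplane-valuated-matroid} this is a tropical hyperplane, and it is by construction spanned by $e-1$ of the data points. A Stiefel tropical linear space contains each row of its defining matrix — this follows from the definition of $L_p$, since plugging such a row into the defining relations produces, for each $(d+1)$-subset, the tropical Laplace expansion of a square tropical matrix with a repeated row, which is tropically singular — so the points $D^{(i)}$ with $i\neq k$ all lie on $\mathcal{H}_k$. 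Hence the distance from the whole data set to $\mathcal{H}_k$ equals $d_{\rm tr}(D^{(k)},\mathcal{H}_k)$, and it remains to choose $k$ so that this single distance equals $\tvol A$.

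Next I would write down $d_{\rm tr}(D^{(k)},\mathcal{H}_k)$ explicitly. Write $M_{k,j}$ for the tropical determinant of the $(e-1)\times(e-1)$ matrix obtained from $A$ by deleting row $k$ and column $j$; these are precisely the tropical Pl\"ucker coordinates of $\mathcal{H}_k$. Since $\mathcal{H}_k$ is a hyperplane (so $d=e-1$), the Red Rule (Theorem \ref{red-rule}) involves only the single $(d+1)$-subset $\tau=[e]$ and produces a correction vector supported in one coordinate; tracing through it gives
\[d_{\rm tr}(D^{(k)},\mathcal{H}_k)\;=\;\Bigl(\max_{j\in[e]}N_{k,j}\Bigr)-m_k,\qquad N_{k,j}:=M_{k,j}+D^{(k)}_j,\]
where $m_k$ is the second-largest value among $N_{k,1},\dots,N_{k,e}$, with the convention $m_k=\max_j N_{k,j}$ (and distance $0$) when that maximum is attained more than once. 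The key identity is that, using $A_{k,j}=D^{(k)}_j$,
\[N_{k,j}=M_{k,j}+A_{k,j}=\max\bigl\{\operatorname{wt}(\pi):\pi\in S_e,\ \pi(k)=j\bigr\},\qquad \operatorname{wt}(\pi):=\textstyle\sum_i A_{i,\pi(i)},\]
which is just the grouping of the permutations appearing in $\tdet A$ according to the value $\pi(k)$; in particular $\max_j N_{k,j}=\tdet A$.

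Finally I would choose $k$. Fix a permutation $\sigma_{opt}$ attaining $\tdet A$ and a permutation $\tau^\ast\neq\sigma_{opt}$ attaining $\bigoplus_{\tau\in S_e-\sigma_{opt}}\operatorname{wt}(\tau)$, so that $\operatorname{wt}(\tau^\ast)=\tdet A-\tvol A$; since two distinct permutations differ in at least two positions, we may take $k$ with $\tau^\ast(k)\neq\sigma_{opt}(k)$. Then $N_{k,\sigma_{opt}(k)}=\tdet A$ (witnessed by $\sigma_{opt}$), while for every $j\neq\sigma_{opt}(k)$ any permutation $\pi$ with $\pi(k)=j$ has $\pi\neq\sigma_{opt}$, so $N_{k,j}\leq\tdet A-\tvol A$; and $N_{k,\tau^\ast(k)}\geq\operatorname{wt}(\tau^\ast)=\tdet A-\tvol A$, forcing $N_{k,\tau^\ast(k)}=\tdet A-\tvol A$. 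Hence $m_k=\tdet A-\tvol A$, and when $\tvol A=0$ the maximum $\tdet A$ is attained at the two distinct indices $\sigma_{opt}(k)$ and $\tau^\ast(k)$, giving distance $0$. Either way $d_{\rm tr}(D^{(k)},\mathcal{H}_k)=\tdet A-(\tdet A-\tvol A)=\tvol A$, which gives the lemma.

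The substance of the argument is the middle step: recognizing the tropical Pl\"ucker coordinates of the hyperplane through $e-1$ of the points as the tropical cofactors $M_{k,j}$ of $A$, and seeing that $M_{k,j}+A_{k,j}$ reassembles the permutation expansion of $\tdet A$ grouped by the value $\pi(k)$. Granted that, the choice of $k$ and the comparison with the second-best permutation $\tau^\ast$ are routine; I do not expect a real obstacle, the only delicate point being the degenerate case $\tvol A=0$ (equivalently, $A$ tropically singular; cf.\ Lemma \ref{common-hyperplane}), in which the hyperplane we construct in fact passes through all $e$ of the data points.
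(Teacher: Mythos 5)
Your proposal is correct and follows essentially the same route as the paper: delete a row $k$ at which the optimal permutation $\sigma_{opt}$ and the second-best permutation disagree, observe that the resulting Stiefel hyperplane contains the other $e-1$ points and has tropical cofactors $M_{k,j}$ as Pl\"ucker coordinates, and apply the Red Rule, identifying the maximum of $M_{k,j}+D^{(k)}_j$ with $\tdet A$ and the second maximum with the weight of the second-best permutation. The only (cosmetic) differences are that the paper normalizes $\sigma_{opt}$ to the identity and treats the tropically singular case $\tvol A=0$ separately via Lemma \ref{common-hyperplane}, whereas you fold it into the main computation.
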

\begin{proof}
Suppose that all $e$ data points can be spanned by a single hyperplane. Then Lemma \ref{common-hyperplane} tells us that this best-fit hyperplane is of distance $\tvol A = 0$ from the data points.

Now suppose that the $e$ data points do not lie on the same hyperplane. 
Without loss of generality, we may assume that the data points $D^{(1)},\ldots, D^{(e)}$ are ordered so that $\sigma_{opt}$ in the above definition of the tropical volume is just the identity, and hence the tropical determinant is attained along the diagonal of $A$.

Let $\rho$ attain the second maximum in the above definition of the tropical volume. Since $\rho$ is not the identity, there must exist some $j$ such that $\rho(j)\neq j$. 
Let $A'$ be the matrix obtained by deleting the $j$th row from $A$, and let $p$ and $\mathcal H$ the tropical Pl\"ucker vector and tropical hyperplane corresponding to $A'$ as in Example \ref{tropical-linear-space-example}. The total distance from $H$ to our data points is just the distance from $H$ to $D^{(j)}$, as all other data points are on $H$ by construction.

We compute the difference vector between $D^{(j)}$ and its projection onto $H$ using the Red Rule (Theorem \ref{red-rule}). The only possible choice for an $e$-sized subset $\tau$ of $[e]$ is just $\tau = [e]$, and we need to compute the maximum and second-maximum values of $p({[e]-\tau_i}) + D^{(j)}_{\tau_i}$, taken over all choices of $\tau_i\in [e]$. For any such $\tau_i$, we note that $p({[e]-\tau_i}) + D^{(j)}_{\tau_i}$ is equal to
\[\bigoplus_{\sigma\in S_d,\; \sigma(j)=\tau_i} \sum_i D_{\sigma(i)}^{(i)}.\]
That is, $p_{\tau-\tau_i}+D_{\tau_i}^{(j)}$ is the tropical sum of all permutations which map $\tau_i$ to $j$. In particular, $\tau_i = j$ must yield the largest choice of $p_{\tau-\tau_i}+D_{\tau_i}^{(j)}$, and the second-largest choice must be attained by $\tau_i=\rho^{-1}(j)$. Hence the Red Rule implies that the distance between $D^{(j)}$ and its projection is just the tropical volume, as desired.
\end{proof}

\begin{remark}
In general, a best-fit Stiefel tropical linear space need not be unique. For example, in the proof of Lemma \ref{upper-bound}, there clearly must be at least two indices $j$ such that $\rho(j)\neq j$.
\end{remark}

We next show that the tropical volume is also an upper bound. To do so, we first derive some intermediate results.

\begin{lemma}
\label{same-tropical-volume}
Let $D^{(1)},\ldots, D^{(e)}$ be a collection of $e$ points in $\RR^e/\RR {\bf 1}$, and let $A$ be the $e\times e$ tropical matrix whose $i,j$th entry is $D^{(i)}_j$. Define the matrix $A'$ whose $i,j$th entry equals $p([e]-\{i\}) + D^{(j)}_i$. Then $A$ and $A'$ have the same tropical volume.
\end{lemma}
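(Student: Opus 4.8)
The plan is to recognize $A'$ as the transpose of $A$ with its rows tropically rescaled, and then to show that the tropical volume is unchanged by transposition and by adding a constant to a row (or column) of a square tropical matrix.

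First I would record the explicit relationship between the two matrices. Since $A_{ij} = D^{(i)}_j$, the transpose $A^{T}$ has $(i,j)$th entry $A_{ji} = D^{(j)}_i$, so
\[
A'_{ij} \;=\; p([e]\setminus\{i\}) + D^{(j)}_i \;=\; c_i + (A^{T})_{ij},
\]
where $c_i := p([e]\setminus\{i\})$ is a fixed real number, finite because $p$ is real-valued on the relevant subsets. Thus $A'$ is obtained from $A^{T}$ by adding the constant $c_i$ to every entry of its $i$th row.

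Next I would verify the two invariance properties directly from the definition $\tvol B = \bigoplus_{\sigma\in S_e}\sum_i b_{i,\sigma(i)} - \bigoplus_{\tau\in S_e-\sigma_{opt}}\sum_i b_{i,\tau(i)}$. Replacing $b_{ij}$ by $b_{ij} + c_i$ increases every permutation weight $\sum_i b_{i,\sigma(i)}$ by the same amount $\sum_i c_i$; hence the set of optimal permutations, and in particular $\sigma_{opt}$, is unchanged, and both tropical sums in the definition increase by $\sum_i c_i$, so their difference is unchanged. The same argument handles adding a constant to a column, with the common shift $\sum_j c_j$. For the transpose, I would use that $\sum_i (A^{T})_{i,\sigma(i)} = \sum_i A_{\sigma(i),i} = \sum_j A_{j,\sigma^{-1}(j)}$, so as $\sigma$ runs over $S_e$ the multiset of permutation weights of $A^{T}$ coincides with that of $A$; in particular the maximum, the set of maximizers, and the second-largest value all agree, and if $A$ is tropically singular so is $A^{T}$, in which case both volumes equal $0$. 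Hence $\tvol(A^{T}) = \tvol(A)$.

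Combining these, $\tvol A'$ equals the tropical volume of $A^{T}$ with its rows rescaled by the constants $c_i$, which by the above equals $\tvol(A^{T}) = \tvol(A)$, as claimed. The only delicate point is the bookkeeping around the restricted second maximum $\bigoplus_{\tau\in S_e-\sigma_{opt}}$: one must confirm that a uniform shift of all permutation weights shifts this restricted maximum by the same constant and leaves $\sigma_{opt}$ fixed, and that transposition preserves tropical singularity so that the degenerate case is handled consistently on both sides. I expect this to be the main obstacle, though it is routine once the definition is unwound.
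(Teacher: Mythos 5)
Your proposal is correct and matches the paper's argument exactly: the paper also observes that $A'$ is the transpose of $A$ with a constant added to each row, and that both operations preserve the tropical volume. You simply supply the routine verifications (invariance of permutation weights under $\sigma\mapsto\sigma^{-1}$ and under uniform row shifts) that the paper leaves implicit.
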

\begin{proof}
We note that $A'$ is obtained from $A$ by transposition then adding some multiple of {\bf 1} to each row. Both of these operations preserve the tropical volume of a matrix.
\end{proof}

Now suppose that $\mathcal H$ is a tropical hyperplane in $\RR^e/\RR {\bf 1}$, and write its corresponding tropical Pl\"ucker vector as $p([e]-\{i\})$. We can calculate the distance $\delta_j(\mathcal H)$ of $\mathcal H$ from the $j$th datapoint $D^{(j)}$ by the Red Rule: the distance is given by
\[\delta_j(\mathcal H) = \max_i(p([e]-\{i\}) + D^{(j)}_i) - \text{2ndmax}_i (p([e]-\{i\}) + D^{(j)}_i).\]
We write the total distance of $\mathcal H$ from our datapoints as $d(\mathcal H)$. It is given by
\[d(\mathcal H)=\sum_j \delta_j(\mathcal H) = \sum_j \left(\max_i(p([e]-\{i\}) + D^{(j)}_i) - \text{2ndmax}_i (p([e]-\{i\}) + D^{(j)}_i)\right).\]
We can rewrite the cost function $d(\mathcal H)$ above by grouping together the summed and subtracted terms. For fixed $j$, define $\alpha_j(\mathcal H) = \max_i(p([e]-\{i\}) + D^{(j)}_i)$ and $\beta_j(\mathcal H) = \text{2ndmax}_i (p([e]-\{i\}) + D^{(j)}_i)$. Then $\delta_j(\mathcal H) = \alpha_j(\mathcal H)-\beta_j(\mathcal H)$, and the cost function can also be written as
\[d(\mathcal H) = \sum_j \delta_j(\mathcal H) =\sum_j \alpha_j(\mathcal H) - \sum_j \beta_j(\mathcal H).\]

\begin{definition}
Fix $j$ in the cost function above, and let $i_1$ and $i_2$ be distinct indices such that $\alpha_j(\mathcal H) = p([e]-\{i_1\})+D^{(j)}_{i_1}$ and $\beta_j(\mathcal H) = p([e]-\{i_2\})+D^{(j)}_{i_2}$. If $\delta_j(\mathcal H)=0$, meaning that $\alpha_j(\mathcal H)=\beta_j(\mathcal H)$, we say that the two indices $i_1$ and $i_2$ \emph{appear in a tie} for index $j$. If there exists another index $i_3$ such that $p([e]-\{i_3\}) + D^{(j)}_{i_3}=\alpha_j(\mathcal H) = \beta_j(\mathcal H)$, we call this a \emph{multiple tie} for index $j$; if there does not exist such an $i_3$, we call this a \emph{two-way tie}.
\end{definition}

Note that, in the event of a tie, we may choose any two of the indices attaining the tie to correspond to $\alpha_j(\mathcal H)$ and $\beta_j(\mathcal H)$.

\begin{lemma}
Let $\mathcal H$ be an optimal hyperplane in $\RR^e/\RR {\bf 1}$, and let $p$
be its corresponding tropical Pl\"ucker vector. Choose an index $i$ such that $p({[e]-\{i\}})<\beta_j\leq \alpha_j$ for all $j$. Then we can perturb $\mathcal H$ to obtain a new best-fit hyperplane $\mathcal H'$ so that $p([e]-\{i\})+D_i^{(j)}=\beta_j$ for some $j$, and this $j$ corresponds to a multiple tie.
\label{hyperplane-perturbation}
\end{lemma}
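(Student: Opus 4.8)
The plan is to treat this as a perturbation argument on the tropical Plücker vector $p$ of an optimal hyperplane, keeping optimality along the way. First I would fix the index $i$ guaranteed by the hypothesis, so that $p([e]-\{i\}) < \beta_j \le \alpha_j$ for every $j$, and consider the one-parameter family $p_t$ obtained by replacing the single coordinate $p([e]-\{i\})$ with $p([e]-\{i\}) + t$ for $t \ge 0$, while leaving all other coordinates fixed. I would first check that $p_t$ remains a tropical Plücker vector: by Remark \ref{tropical-hyperplane-valuated-matroid} every vector in $\RR^e$ (indexed by complements of singletons) is the Plücker vector of a tropical hyperplane, so no exchange relation can be violated — this is the content of the comment surrounding that remark. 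Thus each $p_t$ corresponds to an honest tropical hyperplane $\mathcal H_t$.

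Next I would track the cost $d(\mathcal H_t)$ as $t$ increases from $0$. For each fixed $j$, the quantities $\alpha_j(\mathcal H_t)$ and $\beta_j(\mathcal H_t)$ are computed from the numbers $p_t([e]-\{k\}) + D^{(j)}_k$ over $k \in [e]$; only the $k=i$ term changes, and it increases linearly in $t$. For small $t$, since $p_0([e]-\{i\}) + D^{(j)}_i < \beta_j(\mathcal H_0)$ by the hypothesis (note $p([e]-\{i\}) + D^{(j)}_i \le p([e]-\{i\}) + \max_k D^{(j)}_k$ — I should make sure the hypothesis as stated really gives strict inequality for the relevant sum, possibly after normalizing the $D^{(j)}$ or reinterpreting "$<\beta_j$" as applying to the $i$-term; this is the place where the statement's hypothesis must be read carefully), the $i$-term stays strictly below the second maximum, so $\alpha_j$ and $\beta_j$ are unaffected and $\delta_j(\mathcal H_t) = \delta_j(\mathcal H_0)$. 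Hence $d(\mathcal H_t) = d(\mathcal H_0)$ for all small $t$, so each such $\mathcal H_t$ is still a best-fit hyperplane. I then increase $t$ to the first critical value $t^*$ at which, for some $j$, the $i$-term $p([e]-\{i\}) + t^* + D^{(j)}_i$ first equals the running second maximum $\beta_j(\mathcal H_0)$. Set $\mathcal H' = \mathcal H_{t^*}$ and $p' = p_{t^*}$.

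Finally I would verify the two claimed properties of $\mathcal H'$. By continuity of $d(\mathcal H_t)$ in $t$ (each $\delta_j$ is continuous, being a max minus a second-max of affine functions) and the fact that $d(\mathcal H_t) = d(\mathcal H_0)$ on $[0, t^*)$, we get $d(\mathcal H') = d(\mathcal H_0)$, so $\mathcal H'$ is optimal. By construction, at $t = t^*$ there is a $j$ with $p'([e]-\{i\}) + D^{(j)}_i = \beta_j(\mathcal H')$. It remains to argue this is a \emph{multiple} tie, i.e.\ that $\alpha_j(\mathcal H') = \beta_j(\mathcal H')$ as well, so that $i$ together with the two pre-existing indices $i_1, i_2$ achieving $\alpha_j(\mathcal H_0) = \delta_j$-data form a triple achieving the common maximum. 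The key point: if at $t^*$ the $i$-term only reached the old second maximum without the old $\alpha_j, \beta_j$ having merged, then $\delta_j$ would have been strictly positive at $\mathcal H_0$, and pushing the $i$-term up to equal $\beta_j$ would strictly \emph{decrease} $\delta_j$ (the new second maximum is now larger) while leaving all other $\delta_{j'}$ unchanged — contradicting optimality of $\mathcal H_0$. Therefore we must have had $\delta_j(\mathcal H_0) = 0$, i.e.\ $\alpha_j(\mathcal H_0) = \beta_j(\mathcal H_0)$ was already a tie among indices $i_1 \ne i_2$, and now $i$ joins it, producing a multiple tie.

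The main obstacle I anticipate is the bookkeeping around ties and the precise reading of the hypothesis "$p([e]-\{i\}) < \beta_j \le \alpha_j$": one must be careful whether this is a statement about $p$-coordinates alone or about the shifted quantities $p([e]-\{k\})+D^{(j)}_k$, and one must ensure that raising one coordinate of $p$ cannot accidentally destroy optimality by affecting several $j$'s simultaneously before $t^*$ — handled by taking $t^*$ to be the \emph{first} critical value, so that strictly below $t^*$ nothing changes. Verifying that "nothing changes" for $t < t^*$ is exactly where strictness of the hypothesis is used, and is the crux of the argument.
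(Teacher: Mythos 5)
Your proposal is correct and follows essentially the same route as the paper: raise the unused Pl\"ucker coordinate $p([e]-\{i\})$ until it first enters the cost function, and use optimality of $\mathcal H$ to rule out its entering as a strict second maximum, forcing a multiple tie. The only quibble is the phrasing that reaching $\beta_j$ ``strictly decreases'' $\delta_j$ --- at $t^*$ itself the cost is unchanged, and the contradiction comes from pushing slightly past $t^*$ --- but the paper's own proof is equally terse on this point.
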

\begin{proof}
Because $p({[e] - \{i\}})$ does not appear in the cost function by
assumption, by Remark \ref{tropical-hyperplane-valuated-matroid} we
can find a new hyperplane $\mathcal H'$ with the same tropical Pl\"ucker vector as $\mathcal H$ except for a larger value for $p({[e] - \{i\}})$. 

If we make $p({[e] - \{i\}})$ large enough, it must appear in the cost function for $\mathcal H'$. In fact, it must appear as part of a multiple tie. If it were a second maximum not equal to the maximum, then $\mathcal H'$ would be a better-fitting hyperplane.
\end{proof}

\begin{lemma}\label{lower-bound}
Let $A$ be an $e\times e$ matrix with entries in $\mathbb R\cup\{-\infty\}$ whose rows correspond to points in $\RR^e/\RR {\bf 1}$, and let $A'$ be constructed from $A$ as in Lemma \ref{same-tropical-volume}. Then the tropical volume of $A$ is a lower bound for the cost function. Furthermore, we have that $\sum {\alpha_j}=\tdet A'$.
\end{lemma}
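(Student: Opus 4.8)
The plan is to prove the two assertions separately. For the inequality---that $\tvol A$ lower-bounds the cost function $d(\cdot)$---I would argue geometrically, avoiding the $\alpha_j,\beta_j$ bookkeeping; for the identity $\sum_j\alpha_j=\tdet A'$, which should be read as a statement about a best-fit hyperplane $\mathcal H$ and its associated matrix $A'$, I would use the optimality of $\mathcal H$ together with Lemma \ref{hyperplane-perturbation}.

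For the lower bound, fix an arbitrary tropical hyperplane $\mathcal H$ and project each datapoint onto it, $\bar D^{(j)}:=\pi_{\mathcal H}(D^{(j)})$. A hyperplane is the tropical linear space of an $(e-1)\times e$ matrix, so the only $(d+1)$-subset arising in the Red Rule (Theorem \ref{red-rule}) is $[e]$; hence the difference vector $D^{(j)}-\bar D^{(j)}$ is supported on a single coordinate, with value equal to the gap $\delta_j:=d_{tr}(D^{(j)},\mathcal H)$ between the largest and second-largest of the numbers $p([e]-\{i\})+D^{(j)}_i$. Therefore the matrix $\bar A$ with rows $\bar D^{(1)},\dots,\bar D^{(e)}$ is obtained from $A$ by decreasing one entry in each row $j$ by $\delta_j$. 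All rows of $\bar A$ lie on the common hyperplane $\mathcal H$, so $\tvol\bar A=0$ by Lemma \ref{common-hyperplane}. On the other hand, decreasing one entry of a square tropical matrix by $t\ge 0$ changes each permutation sum by $0$ or $-t$, hence lowers both its tropical determinant and its second-highest permutation sum by at most $t$, and so lowers its tropical volume by at most $t$. Applying this once for each of the $e$ rows gives $\tvol\bar A\ge\tvol A-\sum_j\delta_j$, so $\tvol A\le\sum_j\delta_j=d(\mathcal H)$; since $\mathcal H$ was arbitrary this holds at a best-fit hyperplane, and together with Lemma \ref{upper-bound} it pins down $d(\mathcal H)=\tvol A$ there.

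For the identity, let $\mathcal H$ be best-fit, with Pl\"ucker vector $p$ and matrix $A'$, so $\alpha_j=\max_i A'_{ij}$. The bound $\tdet A'\le\sum_j\alpha_j$ is automatic: no permutation can beat selecting the largest entry in each column independently. The content is the reverse inequality, which is exactly the statement that the column-argmax rows of $A'$ may be chosen to form a permutation. If they cannot, Hall's theorem produces a set $S$ of columns whose argmax rows all lie in a set $R$ of rows with $|R|<|S|$. After using Lemma \ref{hyperplane-perturbation} to arrange that no Pl\"ucker coordinate of $\mathcal H$ is slack, I would perturb $p$---lowering the coordinates indexed by $R$, and/or raising one outside $R$---so as to strictly decrease $\sum_j\delta_j$, contradicting optimality. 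Once $\sum_j\alpha_j=\tdet A'$ is in hand, the companion fact---that $\sum_j\beta_j$ equals the second-highest permutation sum of $A'$---drops out: any cycle in the functional graph $j\mapsto b_j$ of second-argmax rows yields a permutation $\tau\ne\sigma_{opt}$ with $\sum_j A'_{\tau(j),j}\ge\sum_j\beta_j$, while the reverse inequality follows from $d(\mathcal H)=\tvol A=\tvol A'$ (Lemma \ref{same-tropical-volume}) and $\sum_j\alpha_j=\tdet A'$.

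The main obstacle is this last perturbation step. The inequality $d(\mathcal H)\ge\tvol A$ is robust, but extracting the identity amounts to an equality-case analysis: one must show that any hyperplane whose argmax pattern violates Hall's condition can be locally improved, and the perturbation has to be calibrated so that the first-order decrease contributed by the columns in $S$ strictly outweighs any first-order increase on the remaining columns. Keeping track of these competing directional derivatives---this is where the normalization supplied by Lemma \ref{hyperplane-perturbation} enters---is the delicate part of the argument.
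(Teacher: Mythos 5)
Your argument for the first assertion is correct, and it takes a genuinely different route from the paper. The paper proves the lower bound by induction on $e$, with a case analysis of which Pl\"ucker coordinates appear in the cost function and in what kinds of ties, perturbing an optimal hyperplane until $d(\mathcal H)$ is visibly a difference of two permutation sums of $A'$. You instead fix an \emph{arbitrary} hyperplane $\mathcal H$, use the Red Rule (Theorem \ref{red-rule}) to observe that projecting $D^{(j)}$ onto $\mathcal H$ decreases exactly one coordinate by $\delta_j$, note that the projected matrix $\bar A$ has $\tvol \bar A=0$ by Lemma \ref{common-hyperplane}, and then invoke a Lipschitz property of the tropical volume under single-entry decrements. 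That Lipschitz property does hold: decreasing one entry by $t$ changes every permutation sum by $0$ or $-t$, so both the largest and the second-largest permutation sums decrease by amounts in $[0,t]$ (for the second-largest, the two permutations realizing the top two values of the new matrix already witness two values of the old matrix at least that large), whence $\tvol$ drops by at most $t$. Telescoping over the rows gives $0=\tvol\bar A\ge \tvol A-\sum_j\delta_j$, i.e.\ $d(\mathcal H)\ge\tvol A$ for every $\mathcal H$. This is cleaner than the paper's proof, needs no induction and no optimality of $\mathcal H$, and together with Lemma \ref{upper-bound} it already yields Theorem \ref{tropical-volume}.

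The second assertion, $\sum_j\alpha_j=\tdet A'$, is where your proposal has a genuine gap. The reduction to a Hall/SDR condition on the column-argmax pattern of $A'$ is the right framing, and the trivial inequality $\tdet A'\le\sum_j\alpha_j$ is fine, but the perturbation step that is supposed to rule out a Hall violator $(S,R)$ with $|R|<|S|$ is only sketched, and the difficulty you yourself flag is real: lowering $p([e]-\{i\})$ for $i\in R$ decreases $\alpha_j$ for every $j\in S$, but for a column $j\notin S$ whose \emph{second} maximum (and not its maximum) is attained at a row of $R$, it decreases $\beta_j$ and therefore \emph{increases} $\delta_j$; the net first-order effect on $\sum_j\delta_j$ has no definite sign without further control over where the $\beta_j$ are attained. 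Lemma \ref{hyperplane-perturbation} only handles coordinates that are entirely slack, so it does not by itself supply the needed normalization. Closing this requires precisely the bookkeeping of which indices realize which $\alpha_j$ and $\beta_j$, and in what kind of tie, that the paper's inductive case analysis performs (its final case chases a chain of indices $i_0,i_1,i_2,\ldots$ through alternating max/second-max appearances until each index is forced to occur exactly once as an $\alpha$ and once as a $\beta$). As written, the identity is asserted with a plausible strategy but is not proved.
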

\begin{proof}
Let $\mathcal H$ be a best-fit hyperplane in $\RR^e/\RR {\bf 1}$ for the rows of $A$, with corresponding tropical Pl\"ucker vector $p$. The basic argument is as follows: we can perturb $\mathcal H$ to obtain a new best-fit hyperplane whose sum of distances to the data points given by the Red Rule is the difference of two permutations, with the larger permutation corresponding to the tropical determinant of $A'$.

We prove the result by induction on $e$. For the base case, let $e = 1$. Then the tropical volume and the cost function are both trivial.

Suppose we have proved the lemma up to $e-1$. We divide the situation into several possible cases. First, let there be some index $k$ appearing only in ties in the cost function, with at most one of these appearances being a two-way tie. If $k$ appears in a two-way tie, let $D^{(j_k)}$ denote the corresponding datapoint. Otherwise, let $D^{(j_k)}$ denote some datapoint for which $k$ appears in a multiple tie. 

Then we can write the cost function as
\[p({[e]-\{k\})} + D^{(j_k)}_k - p({[e]-\{k\}}) - D^{(j_k)}_k + \sum_{j\neq j_k} \delta_j(\mathcal H).\]

Construct the matrix $A''$ by deleting the $k$th row and $j_k$th column from $A'$. We also define the hyperplane $\mathcal H'\subseteq \RR^{e-1}/\RR {\bf 1}$ obtained by ``deleting'' the index $\{k\}$ from $[e]$: the tropical Pl\"ucker vector $p'$ corresponding to $\mathcal H'$ is defined by
\[p'([e-1]-\{i\}) = \begin{cases} p([e]-\{i\}) & \mbox{if }i<k\\ p([e] - \{i+1\}) & \mbox{if } i\geq k\end{cases}.\] 

Because we assumed that $k$ appears in at most one two-way tie, for any $j\neq j_k$ we can choose the indices corresponding to $\alpha_j(\mathcal H)$ and $\beta_j(\mathcal H)$ so that $k$ does not appear in $\alpha_j(\mathcal H)-\beta_j(\mathcal H)=\delta_j(\mathcal H)$. By construction, therefore, $d(\mathcal H) = \sum_{j\neq j_k} \delta_j(\mathcal H)$ is also the distance between $\mathcal H'\subseteq \RR^{e-1}/\RR {\bf 1}$ and the rows of the matrix $A''$. Furthermore, the optimality of $\mathcal H$ implies that $\mathcal H'$ must be a best-fit tropical hyperplane for the rows of $A''$.

In particular, the inductive hypothesis states that $d(\mathcal H') =\sum_{j\neq j_k} \delta_j(\mathcal H)$ is bounded from below by the tropical volume of $A''$. It also implies that $\sum_{j\neq j'}\alpha_j(\mathcal H) = \tdet A''$. It therefore follows that $d(\mathcal H)=d(\mathcal H')$ is bounded below by a difference of distinct permutations in $A'$, and that $\sum \alpha_j(\mathcal H)$ equals a sum of terms of $A'$ corresponding to some permutation of $S_e$. 

In fact, since each $\alpha_j$ is the largest term in the $j$th row of $A'$, we must have that $\sum \alpha_j = \tdet A'$. Hence we have for some $\sigma\in S_e$,
\[d(\mathcal H)\geq \tdet A' - \sum_i a_{i,\sigma(i)}\geq \tvol A' = \tvol A\]
where the last equality holds by Lemma \ref{same-tropical-volume}.

Now suppose that there exists an index $k$ such that $p({[e]-\{k\}})$ does not appear in any terms in the cost function. Then by Lemma \ref{hyperplane-perturbation}, we may replace $\mathcal H$ with another hyperplane such that $k$ appears only in a multiple tie for some index $j$. We are now in the previous case, and the same argument holds as before.

Finally, suppose that for each index $i$, either $p({[e] - \{i\}})$ appears in the cost function as part of a non-tie, or $p({[e] - \{i\}})$ appears in at least two two-way ties. Pick $i_1$ such that $\alpha_j(\mathcal H) = p([e]-\{i_1\})+D^{(j)}_{i_1}$ for $j$ corresponding to a non-tie. We write this index $j$ as $j_{i_1}$, and we write $i_0$ as the index corresponding to $\beta_{i_{j_1}}(\mathcal H)$. Suppose that there does not exist some other index $j_{i_2}$ such that $\beta_{j_{i_2}}=p({[e]-\{i_1\}})+D^{(j_{i_2})}_{i_2}$. Then we could perturb $\mathcal H$ by slightly lowering $p_{[e]-\{i_1\}}$ to obtain a better-fitting hyperplane, a contradiction. Hence such a $j_{i_2}$ must exist.

In fact, note that we can pick $j_{i_2}$ to avoid a multiple-way tie
at that index. Otherwise, perturbing $p({[e]-\{i_1\}})$ upward would
not affect the second and first minimum, and we could obtain the same contradiction. It follows that the index $j_{i_2}$ must correspond to either a two-way tie or a non-tie. In either case, therefore, there is a unique other index $i_2$ such that $\alpha_{j_{i_2}}=p([e]-\{i_2\}) +D^{(j_{i_2})}_{i_2}$.

If the cost function term corresponding to $j_{i_2}$ is a non-tie, and $i_2$ appeared in no other cost function terms as part of the subtracted term, then we can obtain a contradiction in a similar way as above by perturbing $p({[e]-\{i_2\}})$. If the cost function term corresponding to $j_{k_2}$ is a tie, and $i_1$ and $i_2$ appeared in no other cost function terms as part of the subtracted term, then we could obtain a contradiction in a similar way as above by perturbing $p({[e]-\{i_1\}})$ and $p({[e]-\{i_2\}})$ in sync.

Hence in a similar fashion we may obtain indices $i_3$, and a $i_4$, and so on, such that each $i_k = \alpha_{j_{i_k}}(\mathcal H)$ for some index $j_{i_k}$ corresponding to either a two-way tie or a non-tie. Because there can only be at most $e$ such indices $j_{i_k}$, there must exist $\ell$ and $\ell'$ such that $i_\ell = i_{\ell'}$ with $\ell>\ell'$. If $\ell'\neq 0$, then we may repeat the argument by perturbing $p([e]-\{i_{\ell'-1}\})$ upward, possibly in tandem with some earlier Pl\"ucker coordinates. Hence we must find $i_\ell =i_0$ for some $\ell$.

If $\ell < e$, and if there exists another index $i_{\ell+1}$ which appears as a positive term in the cost function, we repeat the above argument. It therefore follows that if $p([e]-\{i\})$ appears in the cost function as part of a non-tie, it must appear at least twice as part of a non-tie or a two-way tie. By assumption, therefore, each index appears at least twice as part of a non-tie or a two-way tie.


In particular, the pigeonhole principle implies that each index $i$ appears exactly twice as part of a non-tie or a two-way tie. It can thus be assumed that each index appears once as part of some $\alpha_i$ and once as part of some $\beta_i$. Now the distance function $d(\mathcal H)$ is the difference between two different permutations of $S_e$. As before, $\sum_{\alpha_i}$ must therefore equal the tropical determinant of $A'$, and the distance function $d(\mathcal H)$ must be bounded below by the tropical volume as desired.
\end{proof}

Together, Lemmas \ref{upper-bound} and \ref{lower-bound} imply Theorem \ref{tropical-volume}. This result provides a new interpretation for the tropical volume of a collection of $e$ points: it measures the deviation of those points from lying on a common hyperplane. It also suggests a possible extension of the definition of a tropical volume to rectangular matrices (\cite[Section 5]{DGJ}): the tropical volume of a ``skinny'' matrix with more rows than columns could be defined as the sum of the distances of the row-points from a best-fit tropical hyperplane.

If $n > e$, an optimist might hope that the best-fit tropical hyperplane of $n$ points in $\RR^e/\RR{\bf 1}$ would again attain the tropical volume of some subset of $e$ of those points. In fact, this does not hold even for $e = 3$:
\begin{example}\label{a34exam}
Consider the matrix $A$ whose rows correspond to data points in $\RR^3/\RR {\bf 1}$:
\[A = \begin{pmatrix}0 & -2 & -2\\ 0 & -1 & 2\\ 0 & 2 & -1\\ 0 & 2 & 2\end{pmatrix}.\]
The tropical volume of the first three points in $A$ is 4, so any tropical line must attain a distance at least 4 to the four points. This is attained by the tropical line with apex at $(0, 2, 2)$.
\end{example}

\begin{figure}[!ht]
\begin{floatrow}
\ffigbox{%
  \includegraphics[width=2in]{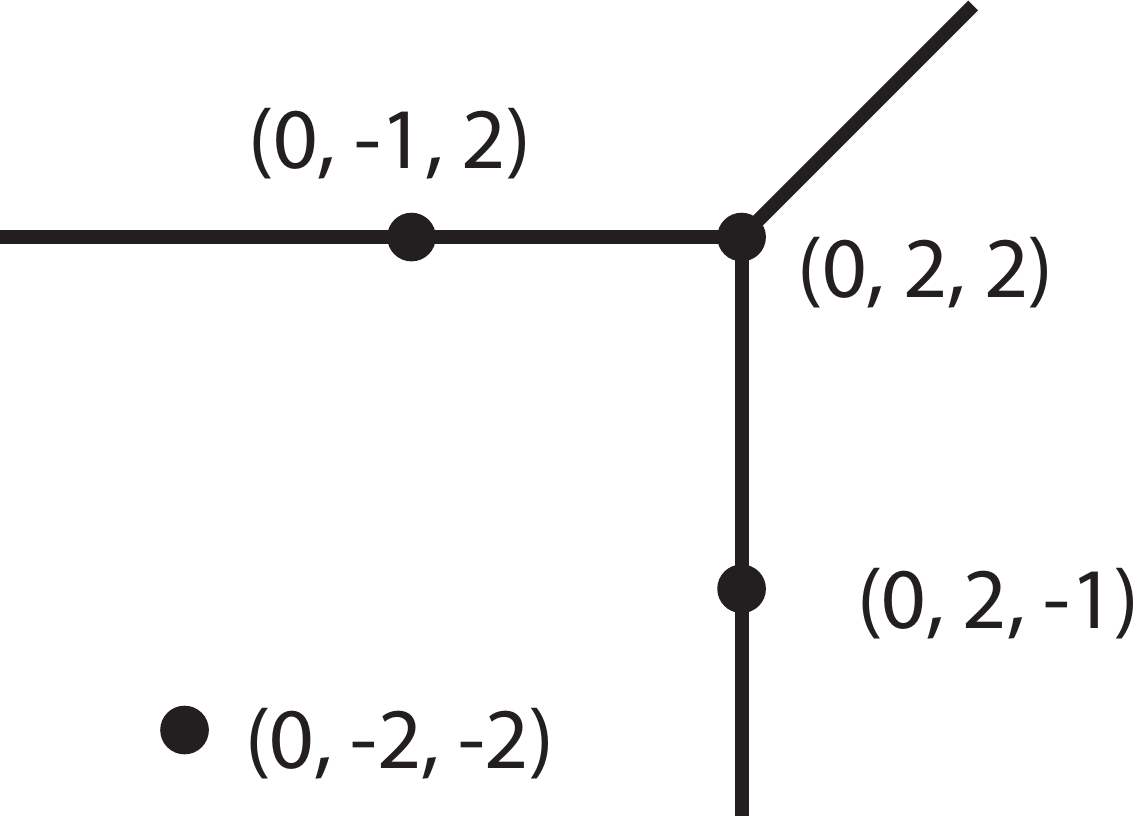}
}{%
  \caption{Example \ref{a34exam}}%
}
\ffigbox{%
  \includegraphics[width=2in]{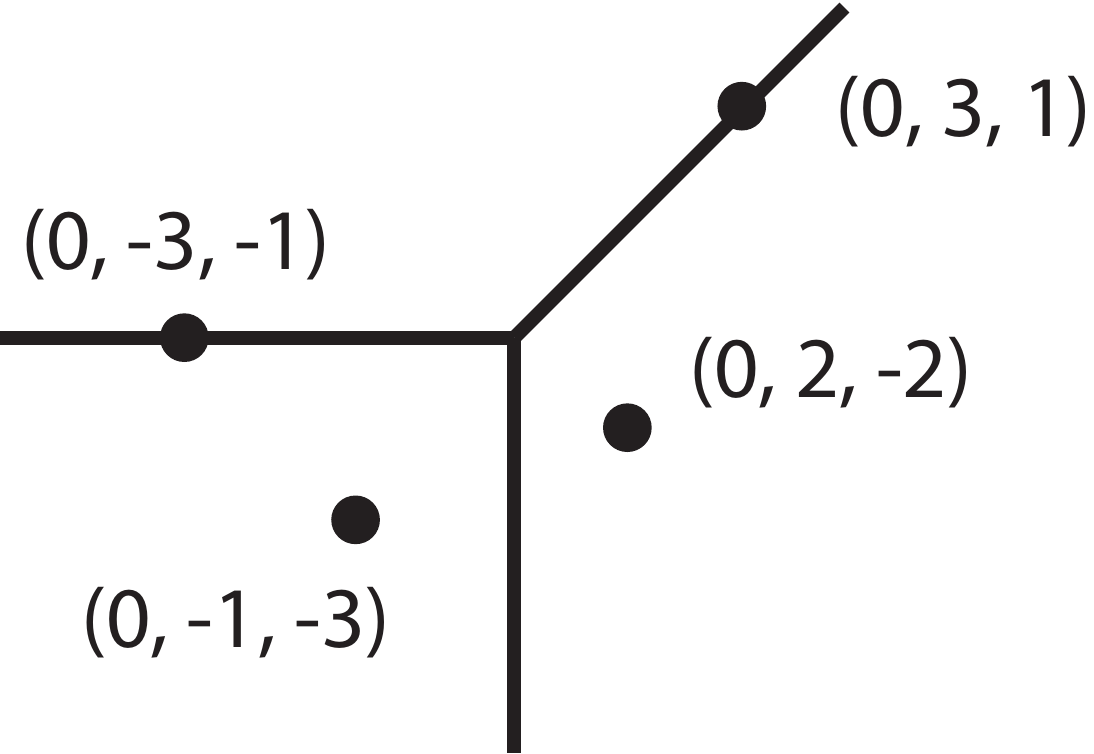}
}{%
  \caption{Example \ref{sad-example}}%
}
\end{floatrow}
\end{figure}
\begin{example}
\label{sad-example}
Let $A$ be the following matrix whose rows correspond to data points in $\RR^3/\RR {\bf 1}$:
\[A = \begin{pmatrix}0 & -1 & -3\\ 0 & 2 & -2\\0 & 3 & 1 \\ 0 & -3 & -1\end{pmatrix}.\]
The largest tropical volume of any choice of three rows is 2, but inspection shows that a best-fit tropical line attains a total distance of 3.
\end{example}

\subsection{Best-fit Stiefel tropical linear spaces}

In view of Theorem \ref{tropical-volume} and Lemma \ref{upper-bound}, we describe an algorithm to approximate a best-fit Stiefel tropical linear space of any given dimension. For simplicity, below we state the algorithm for a Stiefel tropical linear space of dimension 2.

\begin{algorithm}
\label{tropical-linear-space-algorithm}
\begin{algorithmic}
\State Stochastic optimization algorithm to fit $L_p$ to $D^{(i)}$.
\State Fix an ordered set $V = (D^{(1)}, D^{(2)}, D^{(3)})$ and compute $L_p(V)$. 
\Repeat:
	\State Sample three datapoints $D^{(j_1)}, D^{(j_2)}, D^{(j_3)}$ randomly from the set of all datapoints.
	\State Let $V' = \{D^{(j_1)}, D^{(j_2)}, D^{(j_3)}\}$.
	\State Compute $d(L_p(V'))$.
	\State if $d(L_p(V))>d(L_p(V')),$ set $V \gets V'$.
\Until convergence.
\end{algorithmic}
\end{algorithm}

This algorithm attempts to minimize $d(L_p)$ by randomly varying the three points generating $L_p$ within the set of all datapoints. Whenever a choice of three points $V'$ improves upon the current configuration $V$, we replace $V$ with $V'$. Convergence is assessed by considering whether a new choice of $V$ has been found over a fixed number of previous iterations; if no better $V$ is found over some prespecified number of iterations, then the algorithm terminates.

\begin{remark}
Algorithm \ref{tropical-linear-space-algorithm} does not always attain an exact best-fit tropical linear space. This is clear, for example, if we consider a variant of the algorithm for fitting a 0-dimensional Stiefel tropical linear space, i.e., a tropical Fermat-Weber point as in \cite{LY}. In general, the collection of tropical Fermat-Weber points for a given dataset need not contain a data point.
\end{remark}

Because the space of ultrametrics $\mathcal U_m$ is a tropical linear space (Theorem \ref{ultrametrics}), which are tropically convex, the convex hull of points in $\mathcal U_m$ is contained in $\mathcal U_m$. Unfortunately, however, the Stiefel tropical linear space defined by these points may not be contained in $\mathcal U_m$.

\begin{lemma}
\label{Stiefel-containment}
Let $L_p$ be a tropical linear space and $D^{(i)}\in L_p$ some points in the tropical linear space. Then it need not be the case that the Stiefel tropical linear space $L_q$ defined by the points is contained in $L_p$. 
\end{lemma}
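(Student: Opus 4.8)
The plan is to prove this negative assertion by producing an explicit counterexample, and the cleanest place to look is the smallest nontrivial tropical linear space: a tropical line in $\TP^2$. I would take $L_p$ to be the space of ultrametrics $\mathcal U_3$, which by Theorem \ref{ultrametrics} equals $\Trop(L_3)$ --- in coordinates, the set of $x \in \RR^3/\RR\mathbf 1$ for which $\max(x_1,x_2,x_3)$ is attained at least twice. Geometrically this is the tropical line with apex at the origin and three rays in the directions $-e_1,-e_2,-e_3$. The guiding idea is that two distinct points lying on a single one of these rays already span (in the Stiefel sense) a tropical line whose apex has been pushed away from the origin, two of whose rays immediately leave $\mathcal U_3$.

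Concretely I would set $D^{(1)} = (-1,0,0)$ and $D^{(2)} = (-2,0,0)$; both lie in $\mathcal U_3$ since in each case the maximal coordinate $0$ is attained by the second and third entries. Forming the $2\times 3$ matrix $A$ with these rows and applying Definition \ref{stiefel-tropical-linear-space}, I would compute the tropical Pl\"ucker coordinates $p(\{1,2\}) = p(\{1,3\}) = \tdet\begin{pmatrix} -1 & 0 \\ -2 & 0\end{pmatrix} = \max(-1,-2) = -1$ and $p(\{2,3\}) = \tdet\begin{pmatrix} 0 & 0 \\ 0 & 0\end{pmatrix} = 0$. Unwinding the definition of a tropical linear space for the single $3$-subset $\tau = [3]$, the resulting Stiefel tropical linear space $L_q$ is the set of $x$ for which $\max(x_1,\,x_2-1,\,x_3-1)$ is attained at least twice, i.e.\ the tropical line with apex $(-1,0,0) = D^{(1)}$ and rays in directions $-e_1,-e_2,-e_3$ from that apex.

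To conclude I would exhibit a single point of $L_q$ outside $L_p$: the point $x = (-1,-1,0)$ lies on the $-e_2$-ray of $L_q$, since $\max(-1,\,-2,\,-1) = -1$ is attained by the first and third entries, yet $x \notin \mathcal U_3$ because $\max(-1,-1,0) = 0$ is attained only by $x_3$. Hence $L_q \not\subseteq L_p$, proving the lemma. As a sanity check I would also observe that $\mathrm{tconv}(D^{(1)},D^{(2)})$ is merely the segment of the $-e_1$-ray between the two points, which does lie in $\mathcal U_3$ as it must by tropical convexity of $L_p$; it is precisely the extra rays of the Stiefel linear space --- the part beyond the tropical convex hull --- that escape.

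I do not anticipate a genuine obstacle here: the only care needed is in translating Definition \ref{stiefel-tropical-linear-space} into the explicit defining equation for $L_q$ with the correct max-plus sign conventions and index bookkeeping, and in confirming that $D^{(1)}$ and $D^{(2)}$ really are ultrametrics rather than arbitrary points of $\RR^3/\RR\mathbf 1$.
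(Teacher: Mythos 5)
Your proof is correct and takes essentially the same approach as the paper: the paper's counterexample is the tropical line in $\RR^3/\RR\mathbf 1$ centered at the origin with the two points $(0,-1,0)$ and $(0,-2,0)$ on a single ray, which is your example up to a permutation of coordinates. The only difference is that the paper justifies the non-containment by a picture, whereas you carry out the Pl\"ucker-coordinate computation and exhibit an explicit point of $L_q\setminus L_p$, which is if anything more complete.
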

\begin{proof}
For a very simple counterexample, let $L_p$ be the tropical line in $\RR^3/\RR {\bf 1}$ centered at the origin, and take the two points $D^{(1)} = (0, -1, 0)$ and $D^{(2)} = (0, -2, 0)$. We have the picture in Figure \ref{Stiefel_exam}.
\begin{figure}[!ht]
\centering
\includegraphics[width=2.5in]{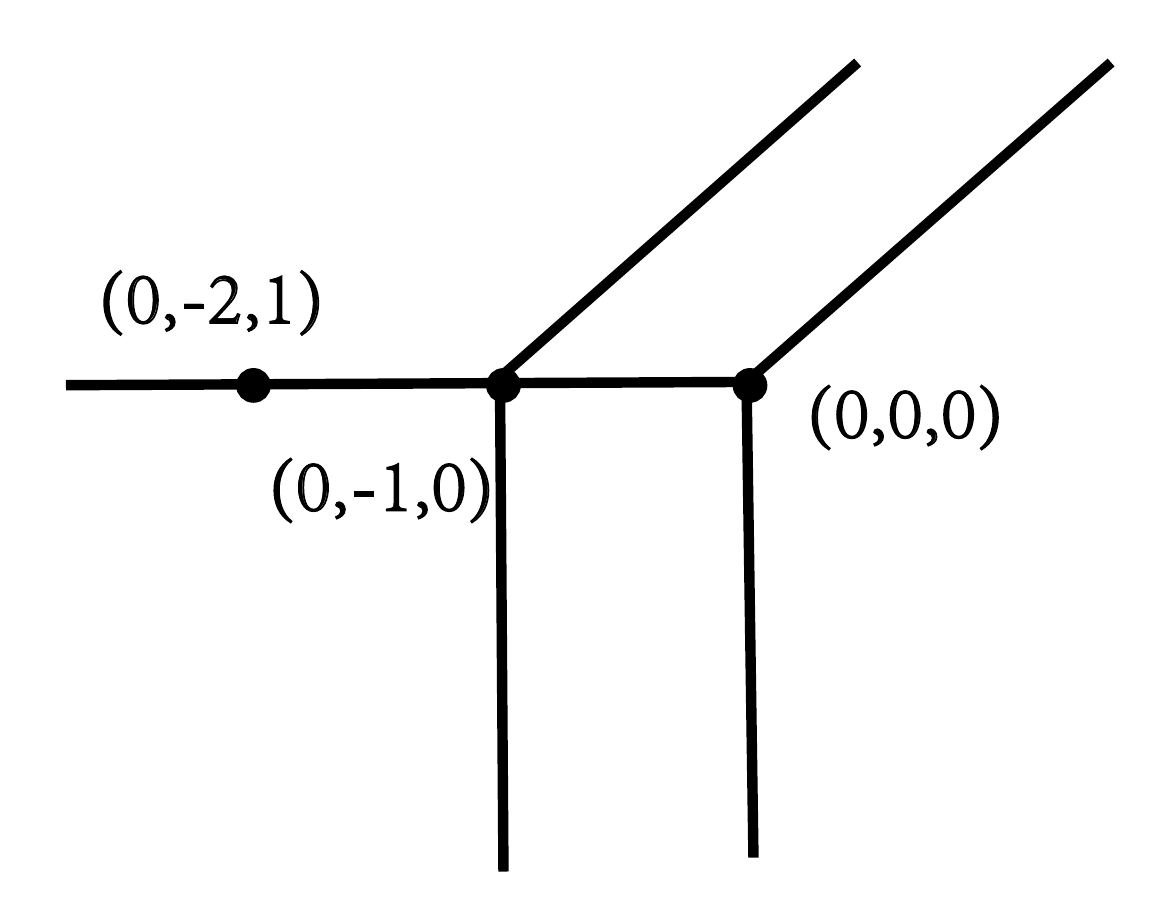}
\caption{Lemma \ref{Stiefel-containment}}
\label{Stiefel_exam}
\end{figure}
\end{proof}

If our data points $D^{(i)}$ correspond to ultrametrics, Lemma \ref{Stiefel-containment} tells us that the tropical linear space produced by Algorithm \ref{tropical-linear-space-algorithm} may not be contained in the overall space of ultrametrics. Hence this approach does not apply directly to the analysis of equidistant trees.

In the proof of Lemma \ref{Stiefel-containment}, however, if our two chosen points $D^{(1)}$ and $D^{(2)}$ lie on different rays of the tropical line $L_p$, it is easy to see that their corresponding Stiefel tropical linear space will be contained in $L_p$ as well. In general, given some points $D^{(i)}$ in a tropical linear space $L_p$, it would be interesting to study the conditions under which their corresponding Stiefel tropical linear space $L_q$ satisfies $L_q\subseteq L_p$. Such a result would enable a natural extension of these methods to the study of ultrametrics.

The classical principal components have a nested structure, in which the $i$th PCA is contained in the $(i+1)$st PCA for each $i$. It is natural to wonder whether a similar relationship holds in this tropical analogue. Again, the situation is complicated.

\begin{example}
\label{fermat-weber-noncontainment}
The minimal distance of a zeroth tropical principal component, or a \emph{tropical Fermat-Weber point}, is given in \cite[Theorem 3]{LY}.

Let $D^{(1)} = (0, -2, -2), D^{(2)} = (0, -1, 2),$ and $D^{(3)} = (0, 2, -1)$. Then their tropical volume equals 4, and a tropical Fermat-Weber point attains a total distance of seven from the three points. A best-fit hyperplane is given by the line with apex at $(0, 1, -2)$, and inspection shows that no point on this line is a Fermat-Weber point.

On the other hand, the point $(0, -1, -1)$ can be checked to be a Fermat-Weber point, and the line with apex at $(0, 2, -1)$ is a best-fit hyperplane containing that Fermat-Weber point. In other words, a best-fit tropical line need not fit a best-fit tropical point, but we can find an example in this case for which this containment holds.

\begin{figure}[!ht]
\centering
\includegraphics[width=2in]{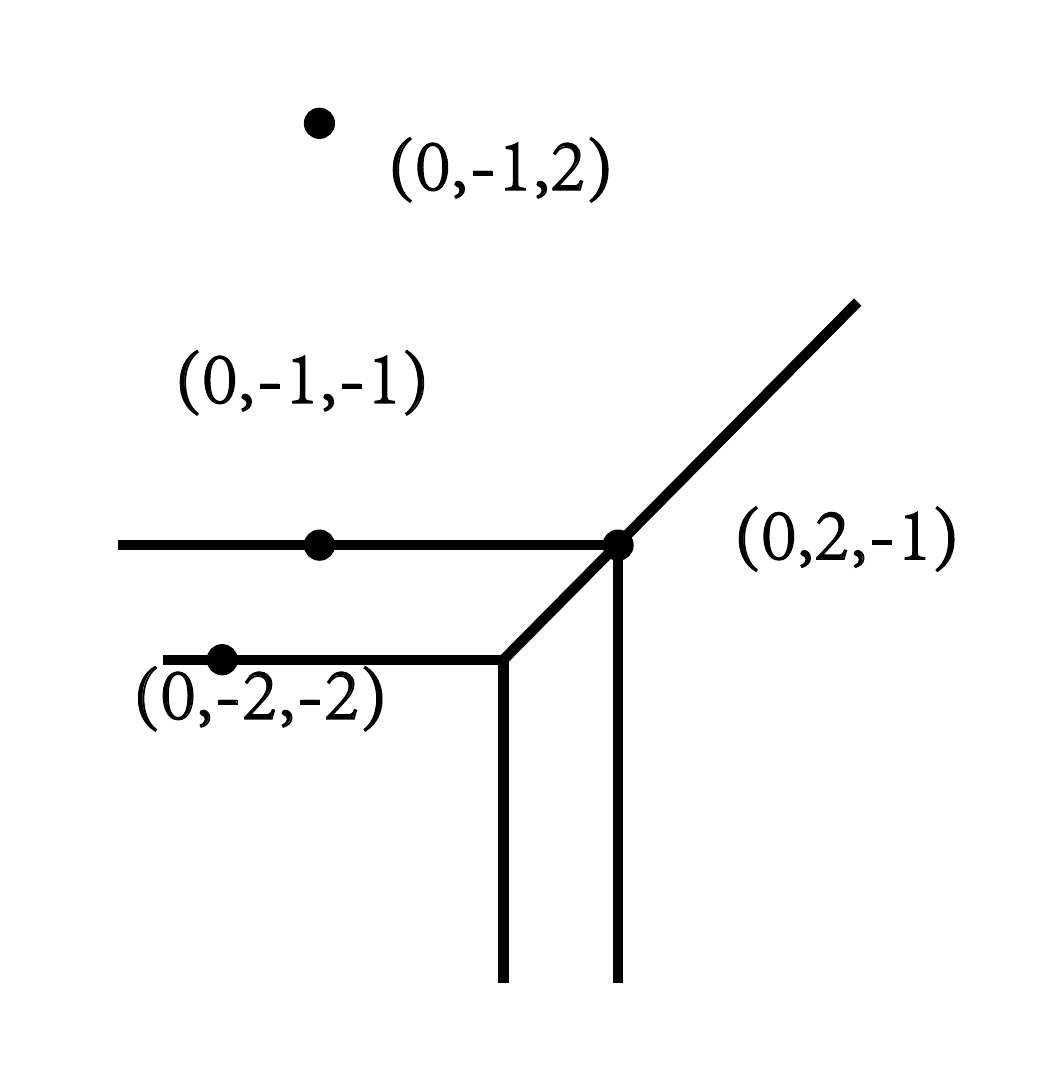}
\caption{Lemma \ref{fermat-weber-noncontainment}}
\label{FW-noncontainment-picture}
\end{figure}
\end{example}

\section{Tropical PCA as a tropical polytope}\label{trop:poly}

We now discuss a different notion of a tropical principal component analysis, in which our analogue to a linear plane is a tropical polytope. Classically, the row-span of a matrix of dimensions $s\times e$ defines a linear space of dimension at most $s$. In the tropical setting, by contrast, Section \ref{trop:conv} tells us that the row-span of a tropical matrix is a tropical polytope.

A tropical principal component analysis, therefore, outputs the tropical convex hull of $s$ points in $\RR^e/\RR {\bf 1}$ minimizing the distances between each
point in the sample to its projection onto the convex hull. For
simplification, we focus on the second order principal components, noting that the following discussion could be generalized to arbitrary $s$. Our problem can be written as follows:
\begin{problem}\label{optimization}
We seek a solution for the following optimization problem:
\[
\min_{D^{(1)}, D^{(2)}, D^{(3)} \in \RR^e/\RR{\bf 1}} \sum_{i = 1}^n  d_{\rm
  tr}(d_i, d'_i)
\]
where
\begin{equation}\label{const1:convex}
d'_i= \lambda_1^i \odot  D^{(1)} \,\oplus \,
\lambda_2^i \odot  D^{(2)} \,\oplus \,
\lambda_3^i \odot  D^{(3)}  ,
\quad {\rm where} \,\, \lambda_k^i = {\rm min}(d_i-D^{(k)}) ,
\end{equation}
and 
\begin{equation}\label{const2:convex}
d_{\rm
  tr}(d_i, d'_i) = \max\{|d_i(k) - d'_i(k) - d_i(l) + d'_i(l)|: 1 \leq
k < l \leq e\}
\end{equation}
with 
\begin{equation}\label{const3:convex}
d_i = (d_i(1), \ldots , d_i(e)) \text{ and } d'_i = (d'_i(1), \ldots , d'_i(e)).
\end{equation}
\end{problem}
 \bigskip 

In fact, this problem can be reformulated in terms of mixed integer programming.
\begin{proposition}\label{polytope_projection}
Problem
\ref{optimization} can be formulated as the following optimization problem:
\begin{align}
\text{minimize }& {}\sum_{i=1}^{n} \Delta_i &\\\nonumber
\text{subject to: } &\Delta_i \ge
d_{i}(k)-d^{'}_{i}(k)-d_{i}(l)+d^{'}_{i}(l), & 1 \le k < l \le e, i\in [n]\\\nonumber
&\Delta_i \ge
-[d_{i}(k)-d^{'}_{i}(k)-d_{i}(l)+d^{'}_{i}(l)], & 1 \le k < l \le e,i\in [n]\\\nonumber
&d^{'}_{i}(k) - (\lambda_p^i+D^{(p)}(k)) \ge 0, & p=1,2,3,1\le k \le e, i\in [n]\\\nonumber
&d^{'}_{i}(k)-(\lambda_p^i+D^{(p)}(k) )\le u_{pik} \times y_{pik}, & p=1,2,3, 1\le k \le e, i\in [n] \\\nonumber
&\sum_{p=1}^3 y_{pik} \le 2, & 1\le k \le e,  i\in [n] \\\nonumber
&0\le y_{pik}\le 1, y_{pik} \text{ is an integer}, & p=1,2,3, 1\le k \le e, i\in [n] \\\nonumber
&d_i (k) - (\lambda_p^i +  D^{(p)}(k))\ge 0, & p=1,2,3,1\le k \le e, i\in [n] \\\nonumber
&d_{i}(k)-(\lambda_p^i+D^{(p)}(k)) \le v_{pik}\times z_{pik}, & p=1,2,3, 1\le k \le e, i\in [n] \\\nonumber
&\sum_{k=1}^e z_{pik} \le e-1, & p=1,2,3,i\in [n] \\\nonumber
&0\le z_{pik}\le 1, z_{pik} \text{ is an integer},& p=1,2,3, 1\le k \le e, i\in [n] \\\nonumber
\end{align}
where $u_{pik}$ and $v_{kip}$ are large enough constants.
\end{proposition}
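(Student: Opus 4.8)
The plan is to show that, once the big-$M$ constants $u_{pik}$ and $v_{pik}$ are chosen large enough, the feasible points of the displayed mixed integer program are exactly the triples $(D^{(1)},D^{(2)},D^{(3)})\in(\RR^e/\RR{\bf 1})^3$ — together with the auxiliary variables $\lambda_p^i$, $d_i'(k)$, $\Delta_i$, $y_{pik}$, $z_{pik}$ that they force — and that on these points the objective $\sum_{i=1}^n\Delta_i$ coincides with the objective of Problem~\ref{optimization}. I would establish this by checking three encodings in turn: first, the constraints involving $z_{pik}$ pin down $\lambda_p^i=\min(d_i-D^{(p)})$; second, the constraints involving $y_{pik}$ then pin down $d_i'$ to be the tropical combination $\lambda_1^i\odot D^{(1)}\oplus\lambda_2^i\odot D^{(2)}\oplus\lambda_3^i\odot D^{(3)}$ of (\ref{const1:convex}), i.e.\ the projection $\pi_{\mathcal P}(d_i)$ of (\ref{eq:tropproj}); and third, the constraints involving $\Delta_i$ force $\Delta_i=d_{\rm tr}(d_i,d_i')$ at an optimum.

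For the first encoding, fix $p$ and $i$. The inequalities $d_i(k)-(\lambda_p^i+D^{(p)}(k))\ge 0$ over $k\in[e]$ say precisely that $\lambda_p^i\le\min_k(d_i(k)-D^{(p)}(k))=\min(d_i-D^{(p)})$. Conversely, integrality of the $z_{pik}$ together with $\sum_{k=1}^e z_{pik}\le e-1$ forces $z_{pik_0}=0$ for at least one index $k_0$, and then $d_i(k_0)-(\lambda_p^i+D^{(p)}(k_0))\le v_{pik_0}z_{pik_0}=0$ gives $\lambda_p^i\ge d_i(k_0)-D^{(p)}(k_0)\ge\min(d_i-D^{(p)})$; hence $\lambda_p^i=\min(d_i-D^{(p)})$. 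The second encoding is the same argument with the roles of $p$ and $k$ interchanged: the inequalities $d_i'(k)-(\lambda_p^i+D^{(p)}(k))\ge 0$ give $d_i'(k)\ge\max_p(\lambda_p^i+D^{(p)}(k))$, while integrality of the $y_{pik}$ with $\sum_{p=1}^3 y_{pik}\le 2$ forces some $y_{p_0ik}=0$, hence $d_i'(k)\le\lambda_{p_0}^i+D^{(p_0)}(k)\le\max_p(\lambda_p^i+D^{(p)}(k))$. Therefore $d_i'(k)=\max_p(\lambda_p^i+D^{(p)}(k))$ for every $k$, which is exactly the $k$th coordinate of the tropical linear combination in (\ref{const1:convex}); in particular $d_i'(k)\le d_i(k)$ by the first encoding, consistent with $d_i'=\pi_{\mathcal P}(d_i)$.

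For the third encoding, the two families $\Delta_i\ge d_i(k)-d_i'(k)-d_i(l)+d_i'(l)$ and $\Delta_i\ge -[d_i(k)-d_i'(k)-d_i(l)+d_i'(l)]$, ranging over $1\le k<l\le e$, say exactly that $\Delta_i\ge\max_{k<l}|d_i(k)-d_i'(k)-d_i(l)+d_i'(l)|=d_{\rm tr}(d_i,d_i')$ by (\ref{const2:convex}). Since $\Delta_i$ occurs in no other constraint and the objective minimizes $\sum_i\Delta_i$, an optimal solution has $\Delta_i=d_{\rm tr}(d_i,d_i')$. Combining the three encodings, the program minimizes $\sum_{i=1}^n d_{\rm tr}(d_i,\pi_{\mathcal P}(d_i))$ over all choices of $D^{(1)},D^{(2)},D^{(3)}$, which is precisely Problem~\ref{optimization}; conversely, any such choice yields a feasible point of the program — take $\lambda_p^i$, $d_i'$, $\Delta_i$ as above, set $z_{pik}=0$ (resp.\ $y_{p i k}=0$) at one index attaining the relevant minimum (resp.\ maximum) and $1$ elsewhere — of equal objective value.

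The one step that genuinely requires care, and which I expect to be the main obstacle, is producing \emph{finite} constants $u_{pik}$, $v_{pik}$ that do not cut off an optimal solution: if they are too small, the constraints $d_i'(k)-(\lambda_p^i+D^{(p)}(k))\le u_{pik}y_{pik}$ or $d_i(k)-(\lambda_p^i+D^{(p)}(k))\le v_{pik}z_{pik}$ can fail at the optimal $(D^{(p)})$, whereas enlarging them never harms correctness (a larger big-$M$ only relaxes its constraint when the binary variable equals $1$ and leaves it unchanged when the variable equals $0$). I would handle this by first noting that the objective of Problem~\ref{optimization} is unchanged if any $D^{(p)}$ is translated by a multiple of ${\bf 1}$, and then arguing by a standard compactness argument that there is a minimizer with all three vertices lying in a fixed bounded box $B$ determined by $d_1,\dots,d_n$ — for instance the coordinatewise min--max box of the data — since a vertex far outside $B$ contributes nothing to the projections of the (bounded) data points and may be moved to the boundary of $B$, or deleted in a limit, without increasing any $d_{\rm tr}(d_i,\pi_{\mathcal P}(d_i))$. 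Restricted to $B$, the quantities $d_i(k)-(\lambda_p^i+D^{(p)}(k))$ and $d_i'(k)-(\lambda_p^i+D^{(p)}(k))$ admit an explicit upper bound in terms of the data, and taking $u_{pik}$, $v_{pik}$ at least that large makes an optimal solution feasible while preserving the equivalence established above.
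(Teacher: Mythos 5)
Your proposal is correct and follows essentially the same route as the paper: the paper derives the identical mixed integer program by first rewriting the definitions of $\lambda_p^i$ and $d_i'(k)$ as complementarity constraints $\prod_{p=1}^{3}[d_i'(k)-(\lambda_p^i+D^{(p)}(k))]=0$ and $\prod_{t=1}^{e}[d_i(t)-(\lambda_p^i+D^{(p)}(t))]=0$ and then linearizing these with the Big-M method, which is exactly the encoding you verify directly through the cardinality constraints $\sum_p y_{pik}\le 2$ and $\sum_k z_{pik}\le e-1$. Your closing discussion of how to choose \emph{finite} constants $u_{pik},v_{pik}$ without cutting off an optimum addresses a point the paper leaves implicit (it merely asserts they are ``large enough upper bounds''), so your argument is, if anything, slightly more complete.
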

\begin{proof}
Our optimization problem can be written more explicitly as
$$\min_{D^{(1)},D^{(2)},D^{(3)}\in \RR^e/\RR{\bf 1}}\sum_{i=1}^{n}\max \{|d_{i}(k)-d^{'}_{i}(k)-d_{i}(l)+d^{'}_{i}(l)|:1 \le k < l \le e\}$$
$$\text{where } d_i^{'} = \lambda_1^i \odot D^{(1)} \oplus \lambda_2^i \odot D^{(2)} \oplus \lambda_3^i \odot D^{(3)}, \text{ with }\lambda_k^i = \min(d_i - D^{(k)}) \text{ and } k=1,2,3.$$
\begin{enumerate}[label=(\roman*)]
\item Define the quantity 
$$\Delta_i = \max \{|d_{i}(k)-d^{'}_{i}(k)-d_{i}(l)+d^{'}_{i}(l)|:1 \le k < l \le e\}, i\in [n].$$
Then the objective function is equivalent to
\begin{align*}
\text{minimize: } & {}\sum_{i=1}^{n}\Delta_i &\\
\text{subject to: } & \Delta_i \ge |d_{i}(k)-d^{'}_{i}(k)-d_{i}(l)+d^{'}_{i}(l)|, & 1 \le k < l \le e.
\end{align*}
These constraints can be reformulated as:
\begin{align*}
\text{subject to: } &\Delta_i \ge d_{i}(k)-d^{'}_{i}(k)-d_{i}(l)+d^{'}_{i}(l), & 1 \le k < l \le e\\
& \Delta_i \ge -[d_{i}(k)-d^{'}_{i}(k)-d_{i}(l)+d^{'}_{i}(l)], & 1 \le k < l \le e.
\end{align*}

\item Recall the definitions
$$d^{'}_i(k)=\max(\lambda_1^i +D^{(1)}(k), \lambda_2^i +D^{(2)}(k), \lambda_3^i +D^{(3)}(k)),$$
\text{where }$\lambda_s^i = \min(d_i - D^{(s)})$.
These are equivalent to
\begin{align*}
d^{'}_i(k)=\text{ maximize: } & \lambda_1^i +D^{(1)}(k), \lambda_2^i+ D^{(2)}(k), \lambda_3^i +D^{(3)}(k), & \\
\text{subject to: } & \lambda_1^i \le d_i (t)- D^{(1)}(t), & t\in [e] \\
& \lambda_2^i \le d_i (t)- D^{(2)}(t), & t\in [e] \\
& \lambda_3^i \le d_i (t)- D^{(3)}(t), & t\in [e].
\end{align*}

\item We can hence divide our original maximization problem into two parts:\\
for all $k=1,2,3,\ldots,e $,
\begin{align*}
d^{'}_i(k)=\text{ maximize: } & \lambda_1^i +D^{(1)}(k), \lambda_2^i+ D^{(2)}(k), \lambda_3^i +D^{(3)}(k), & \\
\text{subject to: } & \lambda_1^i \le d_i (t)- D^{(1)}(t), & t\in [e] \\
& \lambda_2^i \le d_i (t)- D^{(2)}(t), & t\in [e] \\
& \lambda_3^i \le d_i (t)- D^{(3)}(t), & t\in [e].
\end{align*}
and
\begin{align*}
\text{minimize: } & {}\sum_{i=1}^{n}\Delta_i &\\
\text{subject to: } &\Delta_i \ge d_{i}(k)-d^{'}_{i}(k)-d_{i}(l)+d^{'}_{i}(l), & 1 \le k < l \le e\\
& \Delta_i \ge -[d_{i}(k)-d^{'}_{i}(k)-d_{i}(l)+d^{'}_{i}(l)], & 1 \le k < l \le e.
\end{align*}
We can recombine them into one optimization as follows:
\begin{align*}
\text{minimize:} & \sum_{i=1}^{n}\Delta_i & \\\nonumber
\text{subject to: } & \Delta_i \ge
d_{i}(k)-d^{'}_{i}(k)-d_{i}(l)+d^{'}_{i}(l), & 1 \le k < l \le e, i\in [n]\\\nonumber
&\Delta_i \ge
-[d_{i}(k)-d^{'}_{i}(k)-d_{i}(l)+d^{'}_{i}(l)], & 1 \le k < l \le e,i\in [n]\\\nonumber
&d^{'}_{i}(k) \ge \lambda_p^i+D^{(p)}(k), & p=1,2,3, 1\le k \le e, i\in [n]\\\nonumber
&\prod_{p=1}^{3}[d^{'}_{i}(k)-(\lambda_p^i+D^{(p)}(k))]=0, & 1\le k \le e, i\in [n]\\\nonumber
&\lambda_p^i +  D^{(p)}(t)\le d_i (t), & p=1,2,3,t\in [e], i\in [n] \\\nonumber
&\prod_{t=1}^{e}[d_{i}(t)-(\lambda_p^i+D^{(p)}(t))]=0 & p=1,2,3, i\in [n].\nonumber
\end{align*}

\item By adding new binary variables $y_{pik}$ and $z_{pik}$, for each $p\in [3], i\in [n],$ and $k\in [e]$, we can apply the
  Big-M method (an extension of the simplex method \cite{bigM}) to obtain a reformulatation of our problem in terms of mixed integer linear programming:
\begin{align}
\text{minimize }& {}\sum_{i=1}^{n} \Delta_i &\\\nonumber
\text{subject to: } &\Delta_i \ge
d_{i}(k)-d^{'}_{i}(k)-d_{i}(l)+d^{'}_{i}(l), & 1 \le k < l \le e, i\in [n]\\\nonumber
&\Delta_i \ge
-[d_{i}(k)-d^{'}_{i}(k)-d_{i}(l)+d^{'}_{i}(l)], & 1 \le k < l \le e,i\in [n]\\\nonumber
&d^{'}_{i}(k) - (\lambda_p^i+D^{(p)}(k)) \ge 0, & p=1,2,3,1\le k \le e, i\in [n]\\\nonumber
&d^{'}_{i}(k)-(\lambda_p^i+D^{(p)}(k) )\le u_{pik} \times y_{pik}, & p=1,2,3, 1\le k \le e, i\in [n] \\\nonumber
&\sum_{p=1}^3 y_{pik} \le 2, & 1\le k \le e,  i\in [n] \\\nonumber
&0\le y_{pik}\le 1, y_{pik} \text{ is an integer}, & p=1,2,3, 1\le k \le e, i\in [n] \\\nonumber
&d_i (k) - (\lambda_p^i +  D^{(p)}(k))\ge 0, & p=1,2,3,1\le k \le e, i\in [n] \\\nonumber
&d_{i}(k)-(\lambda_p^i+D^{(p)}(k)) \le v_{pik}\times z_{pik}, & p=1,2,3, 1\le k \le e, i\in [n] \\\nonumber
&\sum_{k=1}^e z_{pik} \le e-1, & p=1,2,3,i\in [n] \\\nonumber
&0\le z_{pik}\le 1, z_{pik} \text{ is an integer},& p=1,2,3, 1\le k \le e, i\in [n] \\\nonumber
\end{align}
where $u_{pik},v_{tip}$ are constants, some large enough upper bounds for $d^{'}_{i}(k)-(\lambda_p^i+D^{(p)}(k)$ and $d_{i}(k)-(\lambda_p^i+D^{(p)}(k))$ respectively.

\end{enumerate}
For simplification, we do not explicitly show the constraints on the tropical principal components $D^{(1)},D^{(2)},D^{(3)}$ to be distinct. This could be proved by applying the Big-M method twice. 
\end{proof}

\begin{remark}
Projecting onto a tropical polytope is relatively straightforward compared to projecting onto a tropical linear space (compare Formula \ref{eq:tropproj} and Theorems \ref{blue-rule} and \ref{red-rule}). In theory, one could attempt to reformulate the Stiefel tropical linear space optimization problem from Section \ref{trop:lin} as in Proposition \ref{polytope_projection}; however, the increased complexity of the linear space projection map makes this impractical.
\end{remark}

Due to the large number of variables and constraints involved in Proposition \ref{polytope_projection}, we are able to solve only relatively small cases like Example \ref{polytope_example} below in a reasonable amount of time.

\subsection{Heuristic approximation}
As noted above, the number of variables in the mixed integer linear programming problem in Proposition \ref{polytope_projection} increases quickly with the number of leaves and data points. Because solving mixed linear integer
programming is NP-hard \cite{Lenstra},
this problem is difficult to solve in practice. In analogy with Algorithm \ref{tropical-linear-space-algorithm}, therefore, we develop a heuristic method for approximating the optimal solution for the problem in Proposition 
\ref{polytope_projection}.  
\begin{algorithm}[Approximation for the second order PCA as a tropical polytope]\label{al1} 
\begin{algorithmic}
\State Stochastic optimization algorithm to fit $\mathcal P$ to $D^{(i)}$.
\State Fix an ordered set $V = (D^{(1)}, D^{(2)}, D^{(3)})$ and compute $\mathcal P = \text{tconv}(V)$. 
\Repeat:
	\State Sample three datapoints $D^{(j_1)}, D^{(j_2)}, D^{(j_3)}$ randomly from the set of all datapoints.
	\State Let $V' = \{D^{(j_1)}, D^{(j_2)}, D^{(j_3)}\}$.
	\State Compute $d(\mathcal P') = d(\text{tconv}(V'))$.
	\State if $d(\mathcal P)>d(\mathcal P'),$ set $V \gets V'$.
\Until convergence.
\end{algorithmic}
\end{algorithm}

As before, convergence can be assessed by considering whether a new choice of $V$ has been found over a fixed number of previous iterations. If computational time is limited, another approach might simply be to prespecify a total number $t$ of samples. And of course, when the computational cost is reasonable one could enumerate through all $\binom n 3$ different choices for the generating points of $\mathcal P$ instead of sampling.

\begin{remark}
Three data points $D^{(j_1)}, D^{(j_2)},$ and $D^{(j_3)}$ define both a Stiefel tropical linear space $L_p$ and a tropical polytope $\mathcal P$. Because Stiefel tropical linear spaces are tropically convex, and each of the generating points is contained in $L_p$, we see that $\mathcal P\subseteq L_p$. In particular, given the same convergence criteria, we should expect Algorithm \ref{tropical-linear-space-algorithm} to provide a somewhat better fit than Algorithm \ref{al1}.
\end{remark}

\begin{remark}
Note that Algorithm \ref{al1} is well-suited for applications to phylogenetics. Because $\mathcal U_m$ is a tropical linear space (Theorem \ref{ultrametrics}) and tropical linear spaces are tropically convex, the solution set $\mathcal P = \text{tconv}(D^{(1)},D^{(2)}, D^{(3)})$ obtained from Algorithm \ref{al1} will be contained in the space of ultrametrics. In particular, projections of ultrametrics are also ultrametrics.
\end{remark}



\section{Simulations}\label{sim}

In this section, we apply the previous results to simulated datasets coming from phylogenetics. 

\subsection{Exact methods}

We begin by identifying the exact best-fit tropical polytope with three vertices closest to a small dataset of equidistant trees using Proposition \ref{polytope_projection}. We implemented this proposition mainly based on a {\tt R} interface to the popular optimization software {\tt IBM ILOG CPLEX}, called {\tt cplexAPI}.

\begin{example}\label{polytope_example}
We randomly generated 6 equidistant trees with 3 leaves and computed their vectorized distance matrices in Figure \ref{figure 1} and Table \ref{table 1}. 

\begin{figure}[!ht]
\begin{floatrow}
\ffigbox{%
  \includegraphics[width=0.9\linewidth]{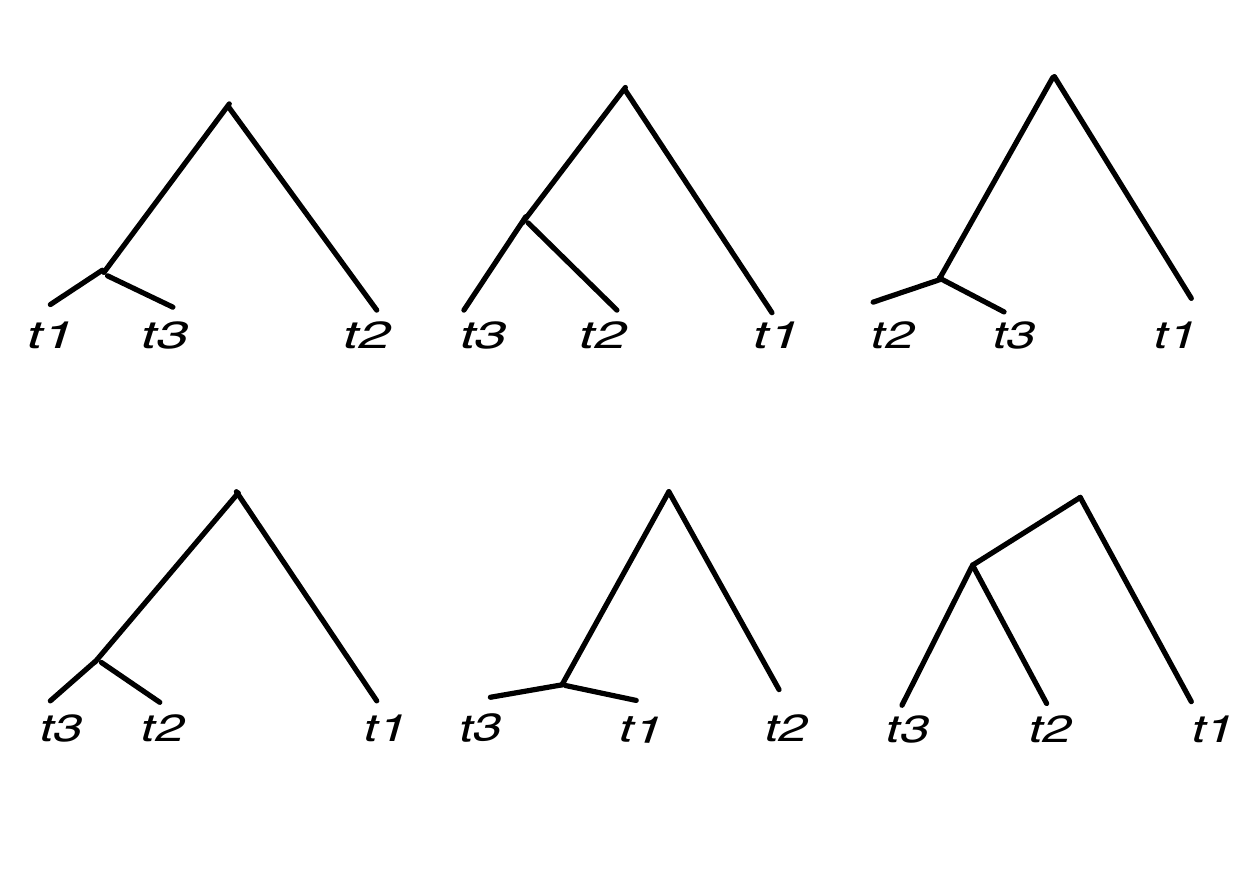}
}{%
  \caption{Random Sample of Trees}%
  \label{figure 1}
}
\capbtabbox{%
  \begin{tabular}[b]{cccc}
	\hline\hline
     tree1 & 0.69089925 & 7.022836 & 7.022836\\
     \hline
     tree2 & 0.53495974 & 1.641369 & 1.641369\\
     \hline
     tree3 & 0.02082164 & 3.101557 & 3.101557\\
     \hline
     tree4 & 0.23519336 & 3.968678 & 3.968678\\
     \hline
     tree5 & 0.19730562 & 5.960980 & 5.960980\\
     \hline
     tree6 & 0.73804678 & 1.090399 & 1.090399\\
     \hline\hline
     \end{tabular}
}{%
  \caption{Vectorized Distance Matrices}%
  \label{table 1}
}
\end{floatrow}
\end{figure}

\begin{figure}[!ht]
\begin{floatrow}
\ffigbox{%
  \includegraphics[width=0.9\linewidth]{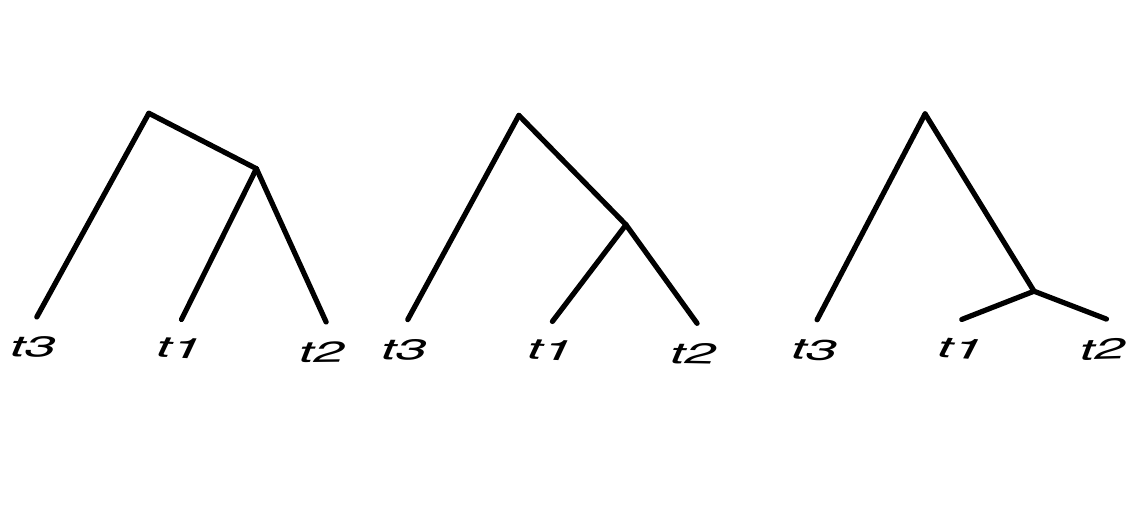}
}{%
  \caption{$D^{(1)},D^{(2)},D^{(3)}$}%
  \label{figure 2}
}
\capbtabbox{%
  \begin{tabular}[b]{cccc}
	\hline\hline
     $D^{(1)}$ & 1 & 1.352352 & 1.352352\\
     \hline
     $D^{(2)}$ & 1 & 2.106409 & 2.106409\\
     \hline
     $D^{(3)}$ & 1 & 7.331937 & 7.331937\\
     \hline\hline
     \end{tabular}
}{%
  \caption{Vectorized Distance Matrices}%
  \label{table 2}
}
\end{floatrow}
\end{figure}

Using our optimization problem formulation from Proposition \ref{polytope_projection}, we obtain $D^{(1)},D^{(2)},D^{(3)}$ for this example. These points are ultrametrics, and they are described in Figure \ref{figure 2} and Table \ref{table 2}. In fact, in this case the best-fit tropical polytope contains all the equidistant trees, so that the sum of distances is zero.
\end{example}

\subsection{Approximative algorithms}
For larger datasets, we turn to the approximative Algorithms \ref{tropical-linear-space-algorithm} and \ref{al1}. We implemented both algorithms in {\tt R}.\footnote{Our software for all computations can be downloaded at \url{http://polytopes.net/computations/tropicalPCA/}.} We then generated a random sample from {\tt Mesquite} \cite{Mesq} and applied our algorithms on this dataset. The sample was constructed as follows:
\begin{algorithm}[Generating the simulation dataset]\label{sim_data} 
                                \qquad  \qquad \qquad 

\begin{enumerate}
\item Generate 250 gene trees with 8 leaves from the coalescent model under a fixed species tree with depth equal to 10 
\item Transform the gene trees to be ultrametrics.
\item Compute approximate second order tropical principal components via the Algorithms \ref{tropical-linear-space-algorithm} and \ref{al1}.
\end{enumerate}
\end{algorithm}

We applied both methods of tropical principal component analysis to a set of random trees generated by Algorithm \ref{sim_data}. In analogy with \cite{NTWY}, we define summary statistics to describe the fit of a Stiefel tropical linear space
or a tropical polytope to a given data
set. If $L_p$ is a Stiefel tropical linear space, we define its distance to the datapoints $d(L_p)$ as
\[d(L_p) = \sum_i d(D^{(i)}, L_p),\]
and a tropical proportion of variance statistic
\[r(L_p) = \frac{\sum_i d_{tr}(\bar \pi, \pi_{L_p}(D^{(i)}))}{\sum_i d_{tr}(D^{(i)}, \pi_{L_p}(D^{(i)})+\sum_i d_{tr}(\bar \pi, \pi_{L_p}(D^{(i)}))}\]
where $\bar \pi$ denotes a Fermat-Weber point of the projections of the datapoints, as in \cite{LY}. These statistics are defined analogously for a tropical polytope $\mathcal P$. The statistic $r(L_p)$ can be interpreted as the proportion of variance explained by $L_p$; in order to remain consistent with the tropical metric, we sum distances rather than squared distances. 

For the polytopal approach, as noted above, the projections will remain ultrametrics. We therefore analyze the topologies of these projections, and compare them with the topology of the species tree.

\subsection{Approximation results}

We applied Algorithm \ref{tropical-linear-space-algorithm} to find an approximate 2-dimensional best-fit Stiefel tropical linear space with a convergence threshold of 100 iterations. The summary statistics for this run were: $d(L_p)=363.0378$ and $r(L_p) = 0.322$.

We also applied a variant of Algorithm \ref{al1} to find an approximate best-fit tropical polytope with three vertices, in which we enumerated through all $\binom{250}{3}$ different choices. The summary statistics were: $d(\mathcal P) = 360.6831$ and 
$r(\mathcal P) = 0.265$. We note that the overall sum of distances is similar between the two methods, but that the best-fit Stiefel tropical linear space explains a slightly higher proportion of variance. 

For the tropical polytope method, we recall that projections of equidistant trees will remain ultrametrics. We present common topologies of the projections as well as the species tree topology
in Figure \ref{sim_topology}.\footnote{Tree topologies of all projected
points can be found in the supplement 
at \url{http://polytopes.net/computations/tropicalPCA/}.}
We observe that these topologies of
projected trees are broadly consistent with the topology of the species tree
under which these gene trees were generated: taxa $g$ and $c$ group
together, as do taxa $h$ and $f$, and the four taxa $a$, $b$, $d$, $e$ also group together. We can view our best-fit tropical polytope as preserving these features of the species tree, meaning that this tropical polytope retains information after projection.
\begin{figure}[!ht]
\centering
\includegraphics[width=0.6\linewidth]{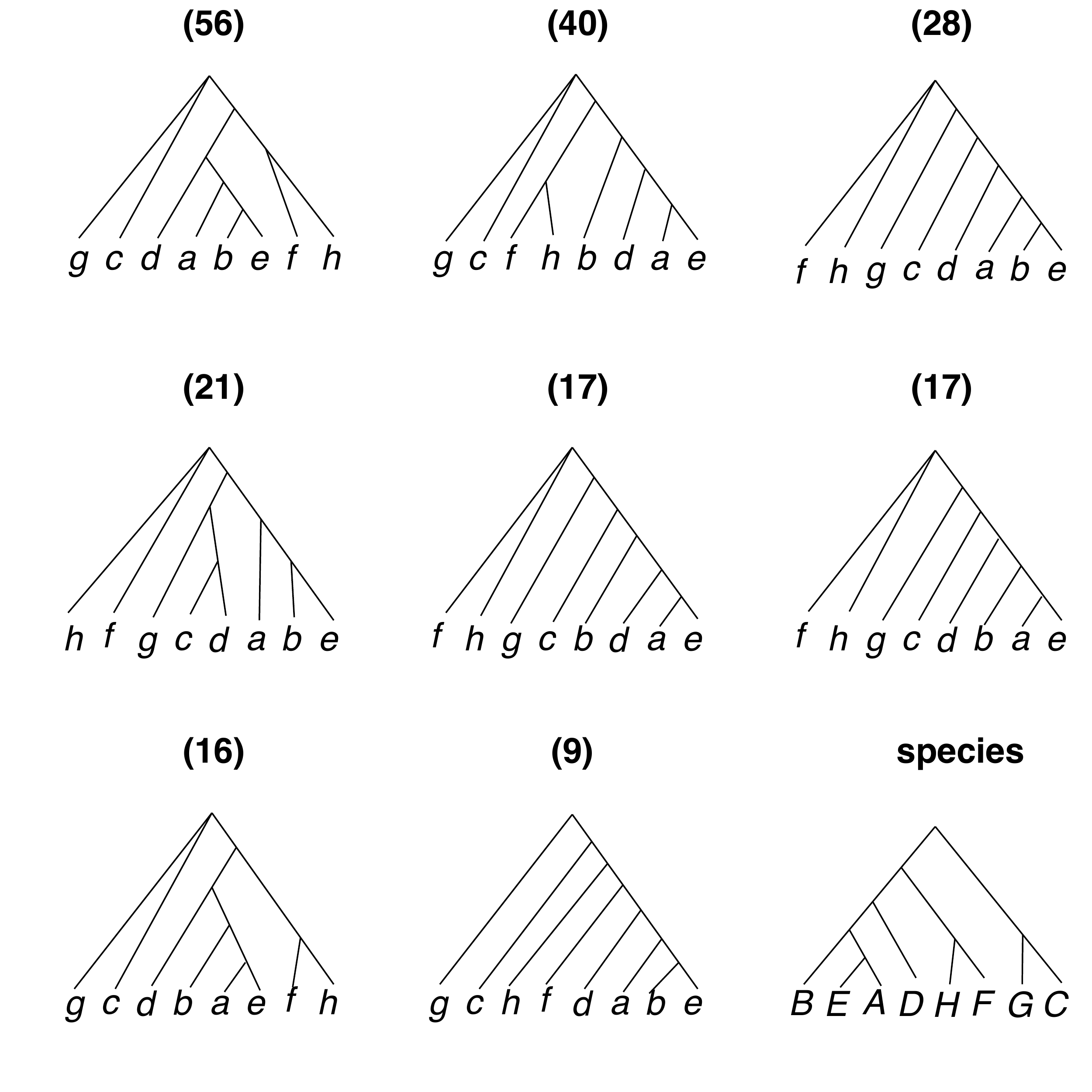}
  \caption{Topology frequencies after projections: the parenthesized numbers are frequencies, and the last tree gives the species tree topology.}
  \label{sim_topology}
\end{figure}

\section{Apicomplexa genome}\label{apicomplexa}

We also applied our tropical principal component algorithms to a set of trees constructed from 252
orthologous sequences on eight species of protozoa in the
Apicomplexa phylum by \cite{KWK}.   
This dataset was also analyzed by Weyenberg et.~al; one can find
more details, such as the gene sequences, in \cite{KDETrees}.
Because ordinary PCA is sensitive to outliers, we removed 16 outlier gene trees identified by \cite{KDETrees} before fitting the tropical principal components.

To find an approximate best-fit 2-dimensional Stiefel tropical linear space, we applied Algorithm \ref{tropical-linear-space-algorithm} with a convergence threshold of 100 iterations. Due to the stochastic nature of the algorithm, we executed the algorithm three times. 
The summary statistics remained consistent between these runs. For one representative execution, these statistics were: $d(L_p) = 145.38$ and $r(L_p)=0.616$. 

We also applied a variant of Algorithm \ref{al1} to find a well-fitted tropical polytope with three vertices, enumerating through all ${\binom{252}{3}}$ possibilities. The summary statistics for this run were: $d(\mathcal P)= 147.0568$ and $r(\mathcal P) = 0.612$. We note that these summary statistics are relatively consistent with the summary statistics obtained from the Stiefel tropical linear space algorithm. 

The tree topologies are presented in Figure \ref{api_topology}. In general, the projected topologies were largely congruent with the generally accepted phylogeny: the two Plasmodium species (Pv and Pf) group together, as do the four species Ta, Bb, Tg, and Et, and Tt is isolated on a deep branch. 

\cite[Theorem 23]{DS} tells us the tropical convex hull of the rows and columns of a matrix are equal. This allows us to visualize our best-fit tropical polytope in the two-dimensional plane $\mathbb R^3/\RR {\bf 1}$ as the tropical convex hull of 28 points. These 28 points divide the polytope into different cells, as described in \cite[Example 9]{JSY}. We plot this polytope, along with its cells and the projections of our data points, in Figure \ref{api_triangle}.  We note that the different topologies seem to divide the tropical polytope PCA into several regions of positive area. 



\begin{figure}[!ht]
\includegraphics[width=1\linewidth]{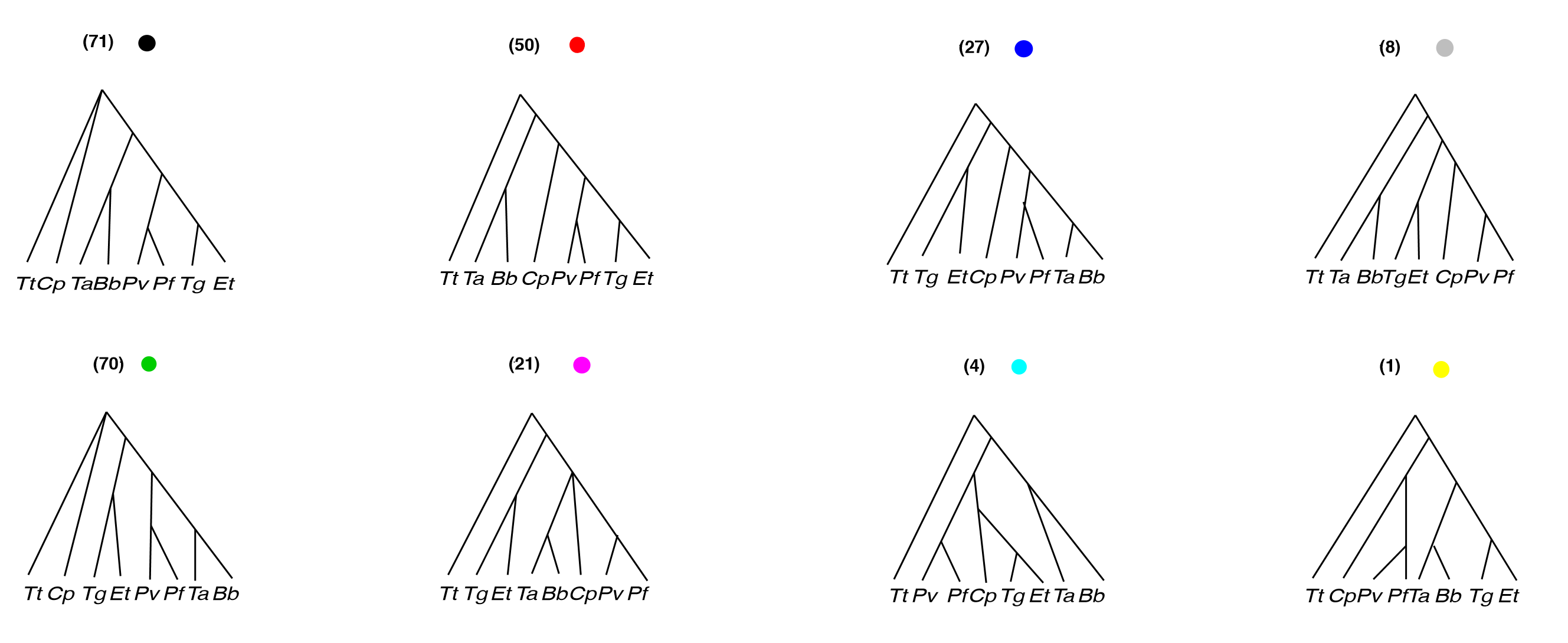}
\caption{Projected topology frequencies from the Apicomplexa dataset: parenthesized numbers give the frequencies of each topology.}%
\label{api_topology}
\end{figure}
\begin{figure}[!ht]

  \includegraphics[width=0.6\linewidth]{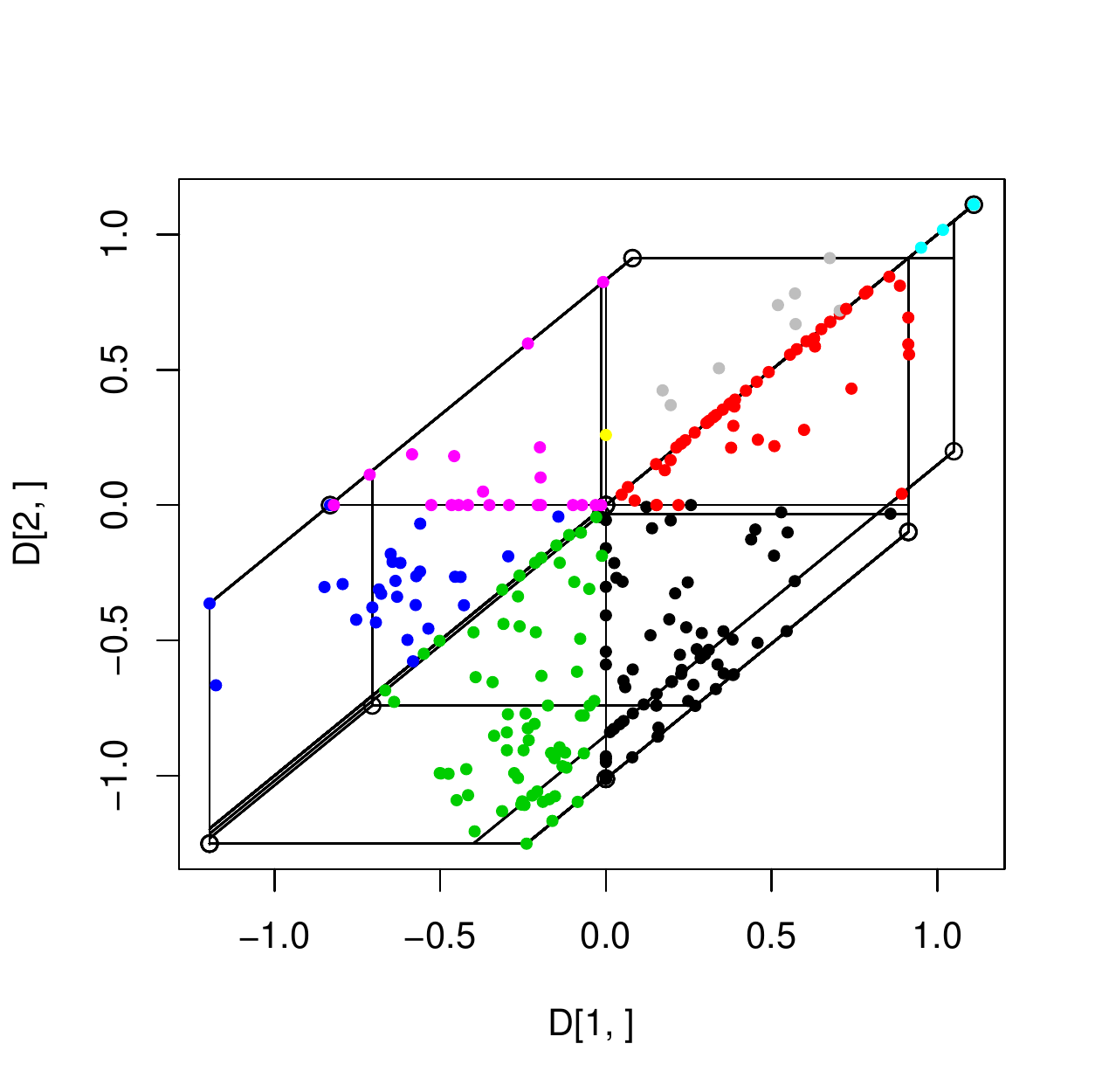}
{%
\caption{Projected points in the tropical polytope PCA, colored as in Figure \ref{api_topology}.}%
\label{api_triangle}
}
\end{figure}

\section{Discussion}\label{discuss}

In recent decades, the field of phylogenetics has found
applications in the analysis on genomic scale data.  In particular, phylogenetic methods have been used to analyze the relationship between species and populations, as well as the evolutionary processes of 
speciation and molecular evolution.  As the cost of generating genomic data continues to decrease, the sheer volume of genomic data demands new analysis techniques. Motivated by this problem from systematic biology, we introduced in this paper a tropical analogue to principal component analysis in the tropical projective torus.

Compared to the classical case, there is still much to be understood about these tropical principal component analyses.  
For example, there is a nested structure to the classical principal components: the zeroth order PCA is contained by the
first order PCA; the first order PCA is
contained by the second order PCA; and so on.  It is unclear whether a similar relationship holds in the tropical analogue, either as a Stiefel tropical linear space or as a tropical polytope. We found examples of best-fit Stiefel tropical linear spaces which do not contain tropical Fermat-Weber points, such as Example \ref{fermat-weber-noncontainment}. In each such case, however, there existed another tropical linear space of equally good fit that did contain a tropical Fermat-Weber point. Because these best-fit tropical structures are in general not unique, it is possible that one could define the principal components so that this containment property holds. Future work could explore this question further.

We also introduced some approximative methods to compute the
second order tropical PCA as a Stiefel tropical linear space and tropical polytope. Both algorithms rely on the uniform sampling of three random points from
the dataset.  However, uniform sampling may not be the most efficient approach to
finding a well-fitted solution. One might explore improvements to
these algorithms using different sampling methods, such as the
Metropolis-Hasting algorithm or Gibb sampling. \cite{ZYCH}

 In \cite{NTWY}, the authors
considered the Billera-Holmes-Vogtman (BHV) \cite{BHV} metric on the
tree space and defined the $(s-1)$st order PCA as the {locus of the weighted Fr\'echet 
  mean} of $s$ distinct points in the tree space. 
Nye et.~al did not use a
convex hull of $s$ distinct points in the tree space under the BHV
metric because Lin et.~al showed in \cite{LSTY} that the dimension of
the convex hull under the BHV metric can be
arbitrary high. In contrast, our methods for tropical principal
component analysis are well-behaved with respect to dimension: the
Stiefel tropical linear space given by an $(s\times e)$-dimensional
matrix will be of dimension $s-1$, and the tropical convex hull of $s$
points has dimension at most $s-1$ as well. \cite[Theorem 5.3.23]{MS} In
statistics, we often use different metrics to analyze
empirical data sets.  Our methods provide a new approach to analyzing phylogenetic tree datasets which may be particularly suitable in certain situations. 

In this work, we also found an exact solution for the best-fit tropical hyperplane of $e$ points in $\RR^e/\RR {\bf 1}$. The general problem of constructing a best-fit Stiefel tropical linear space of given dimension remains unsolved. In addition, given $n$ points in a tropical linear space $L_p$, we noted that the Stiefel tropical linear space defined by these $n$ points may not be contained in $L_p$. Understanding the conditions under which containment holds could enable further application of these techniques to phylogenetics.

\smallskip

\noindent {\bf Acknowledgements.}\smallskip \\
R.~Y.~was supported by Research Initiation Proposals from the Naval
Postgraduate School.  L.~Z.~was supported by an NSF Graduate Research Fellowship. X.~Z.~was supported
by travel funding from the Department of Statistics at the University
of Kentucky.

The authors thank Bernd Sturmfels (UC Berkeley and MPI Leipzig) for many helpful conversations. The authors also thank Daniel Howe (University of Kentucky) for his input on apicomplexa tree topologies.



\end{document}